\theoremstyle{plain}
\newtheorem{theorem}{Theorem}[section]
\newtheorem{proposition}[theorem]{Proposition}
\newtheorem{lemma}[theorem]{Lemma}
\newtheorem{sublemma}{}[theorem]
\newtheorem{subsublemma}{}[sublemma]
\newenvironment{subproof}
{\begin{proof}}
{
\end{proof}}
\newcommand{\dash}{\nobreakdash-\hspace{0pt}}
\newcommand{\ifc}{internally $4$\dash connected}
\newcommand{\ftv}{$(4,3)$\dash violator}
\newcommand{\ba}{\backslash}
\newcommand{\cl}{\operatorname{cl}}
\newcommand{\symdif}{\triangle}
\title
[A splitter theorem for binary matroids II]
{Towards a splitter theorem for internally $4$-connected
binary matroids II}
\author[Chun]{Carolyn Chun}
\address{School of Mathematics, Statistics, and
Operations Research,
Victoria University,
Wellington,
New Zealand}
\email{carolyn.chun@vuw.ac.nz}
\author[Mayhew]{Dillon Mayhew}
\address{School of Mathematics, Statistics, and
Operations Research,
Victoria University,
Wellington,
New Zealand}
\email{dillon.mayhew@vuw.ac.nz}
\author[Oxley]{James Oxley}
\address{Department of Mathematics,
Louisiana State University,
Baton Rouge,
LA,
USA}
\email{ oxley@math.lsu.edu }
\subjclass{05B35}
\date{\today}
\begin{document}

\begin{abstract}
Let $M$ and $N$ be internally $4$\dash connected binary matroids
such that $M$ has a proper $N$\dash minor, and
$|E(N)|\geq 7$.
As part of our project to develop a splitter theorem for
internally $4$\dash connected binary matroids, we prove the
following result:
if $M\ba e$ has no $N$\dash minor whenever $e$ is in a triangle
of $M$, and $M/e$ has no $N$\dash minor whenever $e$ is in
a triad of $M$, then $M$ has a minor, $M'$, such that
$M'$ is internally $4$\dash connected with an $N$\dash minor, and
$1\leq |E(M)|-|E(M')|\leq 2$. 
\end{abstract}

\maketitle

\section{Introduction}

It would be useful for structural matroid theory if we could
make the following statement:
there exists an integer, $k$, such that whenever $M$ and $N$ are
\ifc\ binary matroids and $M$ has a proper $N$\dash minor, then
$M$ has an \ifc\ minor, $M'$, such that $M'$ has an $N$\dash minor,
and $1\leq |E(M)|-|E(M')|\leq k$.
However this statement is false; no such $k$ exists.
To see this, we let $M$ be the cycle matroid
of a quartic planar ladder on $n$ vertices, and we let $N$
be the cycle matroid of the cubic planar ladder on the
same number of vertices.
Then $M$ and $N$ are \ifc, and $M$ has a proper minor
isomorphic to $N$.
Moreover, $|E(M)|=2n$, and $|E(N)|=3n/2$.
However, the only proper minor of $M$ that is \ifc\ with an
$N$\dash minor is isomorphic to $N$.

In light of this example, we concentrate on a different goal.
To aid brevity, let us introduce some notation.
Say that $\mathcal{S}$ is the set of all ordered pairs,
$(M,N)$ where $M$ and $N$ are \ifc\ binary matroids, and
$M$ has a proper $N$\dash minor.
We will let $\mathcal{S}_{k}$ be the subset of
$\mathcal{S}$ for which there is an \ifc\ minor,
$M'$, of $M$ that has an $N$\dash minor and
satisfies $1\leq |E(M)|-|E(M')|\leq k$.
The discussion in the previous paragraph shows that we cannot
find a $k$ so that
$\mathcal{S}\subseteq \mathcal{S}_{k}$.
Instead, we want to show that, for any $(M,N)\in\mathcal{S}$,
either $(M,N)\in \mathcal{S}_{k}$, for some small value of
$k$, or there is some easily described operation
we can perform on $M$ to produce an \ifc\ minor that has
an $N$\dash minor.
To this end, we are trying to identify as many pairs as
possible that belong to $\mathcal{S}_{k}$, for small values of
$k$.
For example, our first step \cite{CMO12} was to show that
if $M$ is $4$\dash connected, then $(M,N)$ is in $\mathcal{S}_{2}$.
In fact, in almost every case, $(M,N)$ belongs to
$\mathcal{S}_{1}$.

\begin{theorem}
\label{splitterI}
Let $M$ and $N$ be binary matroids such that
$M$ has a proper $N$\dash minor, and $|E(N)|\geq 7$.
If $M$ is $4$\dash connected and $N$ is \ifc, then $M$ has
an \ifc\ minor $M'$ with an $N$\dash minor such that
$1\leq |E(M)|-|E(M')|\leq 2$.
Moreover, unless $M$ is isomorphic to a specific $16$-element
self-dual matroid, such an $M'$ exists with $|E(M)| - |E(M')| = 1$.
\end{theorem}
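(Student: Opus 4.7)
The plan is to build on Seymour's Splitter Theorem and then analyse carefully those single-element reductions that are $3$\dash connected but fail to be \ifc. Since $M$ is $3$\dash connected with a proper $N$\dash minor, $N$ is $3$\dash connected, and $|E(N)|\geq 7$ guarantees that $N$ is not a wheel or whirl, the Splitter Theorem supplies an element $e\in E(M)$ such that, up to matroid duality, $M_{1}:=M\ba e$ is $3$\dash connected with an $N$\dash minor. If $M_{1}$ is \ifc, we take $M'=M_{1}$ and are done with $|E(M)|-|E(M')|=1$.

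Otherwise $M_{1}$ admits a $3$\dash separation $(A,B)$ with $|A|,|B|\geq 4$. Because $M$ itself is $4$\dash connected, reinstating $e$ must destroy $(A,B)$, and a standard uncrossing argument pins down local structure of $M$ near $e$, for instance a $4$\dash element circuit or cocircuit meeting both sides. Using this structure, the strategy is to find a second element $f$ whose removal from $M_{1}$ restores internal $4$\dash connectivity while preserving the $N$\dash minor, yielding $M'$ with $|E(M)|-|E(M')|=2$. Here the hypothesis $|E(N)|\geq 7$ supplies the combinatorial slack needed to avoid pinning down any single element of $N$ as indispensable across two successive reductions.

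Running this search over all candidate starting elements, the remaining obstruction is the case in which no compatible pair $(e,f)$ exists for any initial $e$. I expect this to impose enough local symmetry on $M$ that a single binary matroid, on $16$ elements, is forced; its self-duality would reflect the symmetry between the deletion and contraction halves of the argument, since applying everything to $M^{*}$ and $N^{*}$ yields the same obstruction. This singleton constitutes the exceptional matroid of the ``moreover'' clause.

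The main obstacle will be the case analysis in the second step: how a $3$\dash separation of $M_{1}$ sits with respect to $e$, subdivided by whether $e$ lies in or extends a triangle, a triad, a $4$\dash element circuit, or a $4$\dash element cocircuit of $M$, while simultaneously tracking which $N$\dash minor survives each successive reduction. Bixby's lemma and standard element-reduction tools for $3$\dash connected binary matroids should let each case be closed, with the exceptional $16$\dash element matroid identified by a hand-crafted uniqueness argument at the end.
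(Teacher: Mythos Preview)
The paper does not prove Theorem~\ref{splitterI}. It is quoted from the authors' earlier paper \cite{CMO12}, where the full argument appears; here it is stated only as background for the main result, Theorem~\ref{main}. So there is no proof in this paper to compare your proposal against.

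As for the proposal itself: what you have written is a plan, not a proof. The opening move via the Splitter Theorem is correct, and the observation that a $(4,3)$\dash violator in $M\ba e$ must interact with $e$ because $M$ is $4$\dash connected is the right starting point. But from there the document consists of intentions rather than arguments: phrases like ``a standard uncrossing argument pins down local structure'', ``the strategy is to find a second element $f$'', ``I expect this to impose enough local symmetry'', and ``should let each case be closed'' are placeholders for work not yet done. In particular, you have not identified what the local structure actually is (in \cite{CMO12} the key object is a quad of $M\ba e$, since $4$\dash connectivity of $M$ rules out triangles and triads and hence $4$\dash fans), you have not shown how to choose $f$ or why an $N$\dash minor survives, and you have not produced the exceptional $16$\dash element matroid or proved its uniqueness. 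The actual argument in \cite{CMO12} requires a careful analysis of how quads in single-element deletions interact, and the exceptional matroid emerges from a specific configuration of overlapping quads, not from an abstract symmetry principle.
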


An \ifc\ binary matroid is $4$\dash connected if and only
if it has no triangles and triads.
Therefore we have shown that if $M$ has no triangles or triads
(and $|E(N)|\geq 7$), then $(M,N)\in \mathcal{S}_{2}$.
Hence we now assume that $M$ does contain a
triangle or triad.
In this chapter of the series, we consider the case that all
triangles and triads of $M$ must be contained in the ground set
of every $N$\dash minor.
In other words, deleting an element from a triangle of $M$,
or contracting an element from a triad, destroys all
$N$\dash minors.
We show that under these circumstances,
$(M,N)$ is in $\mathcal{S}_{2}$.

\begin{theorem}
\label{main}
Let $M$ and $N$ be \ifc\ binary matroids, such that
$|E(N)| \geq 7$, and $N$ is isomorphic to a proper minor of $M$.
Assume that if $T$ is a triangle of $M$ and $e\in T$, then
$M\ba e$ does not have an $N$\dash minor.
Dually, assume that if $T$ is a triad and $e\in T$, then
$M/e$ does not have an $N$\dash minor.
Then $M$ has an \ifc\ minor, $M'$, such that
$M'$ has an $N$\dash minor, and $1\leq |E(M)|-|E(M')| \leq 2$.
\end{theorem}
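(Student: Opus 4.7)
The plan is to reduce to the case in which $M$ has a triangle, choose an element $e$ that lies outside every triangle and triad of $M$ and whose deletion preserves some $N$-minor, and then remove either $e$ alone or $e$ together with a carefully chosen companion element $f$. If $M$ has neither a triangle nor a triad, it is $4$-connected and Theorem~\ref{splitterI} gives the desired minor $M'$. So I assume $M$ has a triangle; the case of a triad is dual, since the hypotheses are self-dual.

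I first establish a rigidity statement: every triangle $T = \{a,b,c\}$ of $M$ lies in every $N$-minor of $M$ and remains a triangle there. By hypothesis none of $a$, $b$, $c$ may be deleted; and contracting one of them, say $a$, would make $\{b,c\}$ parallel in $M/a$, which must, since $N$ is simple, be broken by a further deletion of $b$ or $c$, producing an $N$-minor of $M\ba b$ or $M\ba c$ and contradicting the hypothesis. Dually, every triad of $M$ is a triad of every $N$-minor. Consequently, every element of $M$ not appearing in a chosen $N$-minor lies in no triangle and no triad of $M$.

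Since the $N$-minor is proper, some such element $e$ exists; either $M\ba e$ or $M/e$ has an $N$-minor, and by duality I may assume $M\ba e$ does. Because $e$ is in no triangle, $M\ba e$ is simple; because $e$ is in no triad, it is cosimple; and a standard connectivity argument from internal $4$-connectivity of $M$ shows that $M\ba e$ is $3$-connected. If $M\ba e$ is \ifc, take $M' := M\ba e$ and we are done with $|E(M)| - |E(M')| = 1$. Otherwise $M\ba e$ has a $3$-separation $(A, B)$ with $|A|, |B| \geq 4$. Comparing $\lambda_M$ with $\lambda_{M\ba e}$ and invoking internal $4$-connectivity of $M$ rules out $e$ being in the closure of $A$ or of $B$ in $M$; thus $e$ is spread across the separation in a tightly constrained fashion. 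From the binary $3$-separation structure I then expect to extract a short local configuration around $e$, a small collection of triangles, triads, or fans, together with a companion element $f$ such that removing $f$ from $M\ba e$, by deletion or contraction, whichever is legal under the rigidity constraint, preserves the $N$-minor and yields an \ifc\ matroid $M' \in \{M\ba e\ba f,\, M\ba e/f\}$. Then $|E(M)| - |E(M')| = 2$ as required.

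The main obstacle is the final case analysis. The rigidity constraint both pins down the local configurations that can arise near $e$ and restricts the legal choices of companion element $f$: the most natural candidates are often forbidden because they lie in a triangle (blocking deletion) or a triad (blocking contraction). Enumerating the possible configurations and, for each, either locating an admissible $f$ or showing that the configuration contradicts the rigidity statement from the first step is where I expect the bulk of the technical work to lie.
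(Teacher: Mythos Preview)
Your opening moves match the paper: the rigidity statement that every triangle and triad of $M$ survives into every $N$-minor is exactly Lemma~\ref{trianglespersist}, and the existence of a removable element outside all triangles and triads follows. Your claim that $M\ba e$ is $3$-connected is also correct.

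The genuine gap is everything after that. You propose to analyze an arbitrary $(4,3)$-violator $(A,B)$ of $M\ba e$ and ``extract a short local configuration around $e$'' together with a companion element $f$. But you have no mechanism for bounding $|A|$ or $|B|$; with only $3$-connectivity of $M\ba e$, both sides of a bad $3$-separation can be arbitrarily large, and there is no reason to expect a finite local picture. The paper does not attempt this. Instead it invokes a substantial external result, Zhou's theorem (Theorem~\ref{zhou}), which guarantees an element $e$ such that $M\ba e$ (or $M/e$) is already $(4,4)$-connected with an $N$-minor, unless $M$ is one of a short list of ladder-type matroids; in the latter case every element lies in a triangle or triad, contradicting rigidity. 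With $(4,4)$-connectivity in hand, every bad $3$-separation has a side of size exactly four, hence is a quad or a $4$-fan, and only then does the case analysis become tractable.

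Even with that reduction, the remaining work is far from the one-step ``find $f$'' you sketch. The paper needs five interlocking lemmas (\ref{fanlemma}--\ref{conquad}) that successively rule out $4$-fans in $M\ba e$, show that deleting from a quad kills all $N$-minors, and finally analyze contracting from a quad, each step producing a new $(4,4)$-connected matroid whose bad separations must again be controlled. Your rigidity constraint is used repeatedly, but it is not by itself enough to pin down the configurations; the arguments rely on delicate orthogonality and connectivity bookkeeping specific to the quad/$4$-fan dichotomy. Without Zhou's theorem or an equivalent, your outline does not get off the ground, and with it, the analysis is still substantially more intricate than your final paragraph suggests.
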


With this result in hand, in the next chapter
\cite{SplitIII}
we will be able
to assume that (up to duality) $M$ has a triangle $T$ and an
element $e\in T$ such that $M\ba e$ has an $N$\dash minor.

We note that Theorem~\ref{main} is not strictly a
strengthening of Theorem~\ref{splitterI} as, in the earlier
theorem, we completely characterized when
$(M,N)$ was in $\mathcal{S}_{2}-\mathcal{S}_{1}$.
We make no attempt to obtain the corresponding characterization
in Theorem~\ref{main}, as we believe that
$\mathcal{S}_{2}-\mathcal{S}_{1}$ will contain many more pairs when
we relax the constraint that $M$ is $4$\dash connected.
For example, let $N$ be obtained from a binary projective geometry
by performing a $\Delta\text{-}Y$ exchange on a triangle $T$.
Let $T'$ be a triangle that is disjoint from $T$.
We obtain $M$ from $N$ by coextending by the element $x$
so that it is in a triad with two elements from $T'$, and then
extending by $y$ so that it is in a circuit with $x$
and two elements from $T$.
It is not difficult to confirm that the hypotheses of
Theorem~\ref{main} hold, but $M$ has no \ifc\ single-element
deletion or contraction with an $N$\dash minor.
Clearly this technique could be applied to create even more
diverse examples.

\section{Preliminaries}

We assume familiarity with standard matroid
notions and notations, as presented in \cite{Oxl11}.
We make frequent, and sometimes implicit, use of the following
well-known facts.
If $M$ is $n$\dash connected, and $|E(M)| \geq 2(n-1)$, then
$M$ has no circuit or cocircuit with fewer than
$n$ elements \cite[Proposition~8.2.1]{Oxl11}.
In a binary matroid, a circuit and a cocircuit must meet in a
set of even cardinality \cite[Theorem~9.1.2(ii)]{Oxl11}.
Morever, the symmetric difference, $C \symdif C'$,
of two circuits in a binary matroid is a
disjoint union of circuits \cite[Theorem~9.1.2(iv)]{Oxl11}.

We use `by orthogonality' as shorthand for the statement
`by the fact that a circuit and a cocircuit cannot
intersect in a set of cardinality one'
\cite[Proposition~2.1.11]{Oxl11}.
A \emph{triangle} is a $3$\dash element circuit, and
a \emph{triad} is a $3$\dash element cocircuit.
We use $\lambda_{M}$ or $\lambda$ to denote the connectivity function of
the matroid $M$.
If $M$ and $N$ are matroids,
an \emph{$N$\dash minor} of $M$ is
a minor of $M$ that is isomorphic to $N$.

Let $M$ be a matroid.
A subset $S$ of $E(M)$ is a \emph{fan} in $M$ if $|S|\geq 3$ and
there is an ordering $(s_{1},s_{2},\ldots,s_{n})$ of $S$ such that
\[\{s_{1},s_{2},s_{3}\},\{s_{2},s_{3},s_{4}\},\ldots,
\{s_{n-2},s_{n-1},s_{n}\}\]
is an alternating sequence of triangles and triads.
We call $(s_{1},s_{2},\ldots,s_{n})$ a \emph{fan ordering} of $S$.
Sometimes we blur the distinction between a fan and
an ordering of that fan.
Most of the fans we encounter have four or five elements.
We adopt the following convention: if
$(s_{1},s_{2},s_{3}, s_{4})$ is a fan ordering of a
$4$\dash element fan, then $\{s_{1},s_{2},s_{3}\}$ is a triangle.
We call such a fan ordering a \emph{$4$\dash fan}. 
We distinguish between the two different types of
$5$\dash element fan by using \emph{$5$\dash fan} to refer to
a $5$\dash element fan containing two triangles, and
using \emph{$5$\dash cofan} to refer to a $5$\dash element
fan containing two triads.

The next proposition is proved by induction on $n$, using the
fact that $s_{n}$ is contained in either the closure or
the coclosure of $\{s_{1},\ldots, s_{n-1}\}$.

\begin{proposition}
\label{fans3sep}
Let $(s_{1},\ldots, s_{n})$ be a fan ordering in a matroid
$M$.
Then
\[
\lambda_{M}(\{s_{1},\ldots, s_{n}\})\leq 2.
\]
\end{proposition}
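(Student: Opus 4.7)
The plan is to proceed by induction on $n$, exactly as suggested by the hint in the excerpt. The key identity I will use throughout is $\lambda_M(S) = r(S) + r(E(M)\setminus S) - r(M)$.

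For the base case $n = 3$, the set $\{s_1,s_2,s_3\}$ is either a triangle or a triad. If it is a triangle, then $r(\{s_1,s_2,s_3\}) = 2$, and since $r(E(M)\setminus\{s_1,s_2,s_3\}) \leq r(M)$, we immediately obtain $\lambda_M(\{s_1,s_2,s_3\}) \leq 2$. The triad case follows by the dual computation, or equivalently by observing that $\lambda_M(S) = \lambda_M(E(M)\setminus S)$ and working with the triad's dual role.

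For the inductive step, suppose $n \geq 4$ and that the bound holds for $\{s_1,\dots,s_{n-1}\}$. By the definition of a fan ordering, $\{s_{n-2},s_{n-1},s_n\}$ is either a triangle or a triad. If it is a triangle, then $s_n \in \cl(\{s_{n-2},s_{n-1}\}) \subseteq \cl(\{s_1,\dots,s_{n-1}\})$, so $r(\{s_1,\dots,s_n\}) = r(\{s_1,\dots,s_{n-1}\})$. Since removing $s_n$ from $E(M)\setminus\{s_1,\dots,s_{n-1}\}$ cannot raise the rank, we get
\[
\lambda_M(\{s_1,\dots,s_n\}) \leq \lambda_M(\{s_1,\dots,s_{n-1}\}) \leq 2.
\]
If instead $\{s_{n-2},s_{n-1},s_n\}$ is a triad, then $s_n \in \cl^*(\{s_1,\dots,s_{n-1}\})$, and the analogous argument (or a dual appeal via $\lambda_M = \lambda_{M^*}$) shows that $r(E(M)\setminus\{s_1,\dots,s_n\}) = r(E(M)\setminus\{s_1,\dots,s_{n-1}\}) - 1$, while $r(\{s_1,\dots,s_n\}) \leq r(\{s_1,\dots,s_{n-1}\}) + 1$. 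Substituting into the connectivity formula again yields $\lambda_M(\{s_1,\dots,s_n\}) \leq \lambda_M(\{s_1,\dots,s_{n-1}\}) \leq 2$.

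There is no substantial obstacle in this argument; the only subtlety is verifying the closure/coclosure statement for $s_n$, which is immediate from the alternation property built into the fan ordering. The proof is essentially a bookkeeping exercise with the connectivity function, and the bound $2$ is inherited unchanged from stage to stage rather than being reproved from scratch at each step.
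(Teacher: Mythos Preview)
Your argument is correct and follows exactly the approach the paper indicates: induction on $n$, using that $s_n$ lies in the closure or coclosure of $\{s_1,\ldots,s_{n-1}\}$ to show the connectivity does not increase. There is nothing to add.
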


\begin{lemma}
\label{minorsof45fans}
Let $M$ be a binary matroid that has an \ifc\ minor, $N$,
satisfying $|E(N)|\geq 8$.
If $(s_{1},s_{2},s_{3},s_{4})$ is a $4$\dash fan of $M$,
then $M\ba s_{1}$ or $M/s_{4}$ has an $N$\dash minor.
If $(s_{1},s_{2},s_{3},s_{4},s_{5})$ is a
$5$\dash fan in $M$, then either $M\ba s_{1}\ba s_{5}$ has an
$N$\dash minor, or both $M\ba s_{1}/s_{2}$ and $M/s_{4}\ba s_{5}$
have N\dash minors.
In particular, both $M\ba s_{1}$ and	$M\ba s_{5}$ have $N$\dash minors.
\end{lemma}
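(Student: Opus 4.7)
I would prove both parts by analyzing a hypothetical minor representation $N = M \ba D / C$ with $D, C$ disjoint, and arguing by contradiction for the $4$\dash fan. The key property of $N$ is that, being internally $4$\dash connected with $|E(N)| \geq 8$, it is simple, cosimple, and contains no $4$\dash fan: otherwise, by Proposition~\ref{fans3sep}, such a fan would exhibit a $4$\dash element set of connectivity at most $2$ whose complement has $\geq 4$ elements, violating internal $4$\dash connectivity.

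For the $4$\dash fan $(s_1, s_2, s_3, s_4)$ with triangle $T = \{s_1, s_2, s_3\}$ and triad $T^* = \{s_2, s_3, s_4\}$, I suppose for contradiction that $N$ is a minor of $M$ but of neither $M \ba s_1$ nor $M/s_4$. Choose a representation $N = M \ba D / C$ with $|D| + |C|$ minimum; then $s_1 \in C \cup E(N)$ and $s_4 \in D \cup E(N)$. Because triangles and triads of $M$ disjoint from $D$ and $C$ persist to $N$, if all four fan elements lay in $E(N)$ the fan itself would survive, which is impossible. Hence at least one of $s_1 \in C$, $s_4 \in D$, $s_2 \in D \cup C$, $s_3 \in D \cup C$ occurs, and in each case I would rotate the removal along the fan. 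For instance, if $s_1 \in C$ then $M/s_1$ has parallel pair $\{s_2, s_3\}$; cosimplicity of $N$ forces one of $s_2, s_3$, say $s_2$, into $D \cup C$; the interaction of this parallel pair with the ambient matroid then allows the minor representation to be rewritten with $s_1 \in D$, contradicting the assumption. The case $s_4 \in D$ is dual via the series pair $\{s_2, s_3\}$ in $M \ba s_4$, and the interior cases reduce to these by first inducing a secondary parallel or series pair through the deletion or contraction of $s_2$ or $s_3$.

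For the $5$\dash fan $(s_1, s_2, s_3, s_4, s_5)$, both $(s_1, s_2, s_3, s_4)$ and $(s_5, s_4, s_3, s_2)$ are $4$\dash fans, so the $4$\dash fan part supplies two alternatives for each. I would first strengthen these to show that both $M \ba s_1$ and $M \ba s_5$ have $N$\dash minors. The extra ingredient is the identity $\{s_1, s_2, s_3\} \symdif \{s_3, s_4, s_5\} = \{s_1, s_2, s_4, s_5\}$, which is a $4$\dash circuit of $M$ because in a binary matroid the symmetric difference of two circuits decomposes into disjoint circuits and $M$'s simplicity rules out a $2+2$ split. Contracting $s_4$ therefore produces in $M/s_4$ a triangle $\{s_1, s_2, s_5\}$ alongside the parallel pair $\{s_3, s_5\}$. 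If $M/s_4$ has an $N$\dash minor but $M \ba s_1$ does not, then simplicity of $N$ forces removal of $s_3$ or $s_5$; using the triangle $\{s_1, s_2, s_5\}$ that removal rotates to one of $s_1$, giving an $N$\dash minor of $M \ba s_1$ and the desired contradiction. The symmetric argument handles $M \ba s_5$.

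With both $M \ba s_1$ and $M \ba s_5$ carrying $N$\dash minors, I apply the $4$\dash fan part inside each: in $M \ba s_1$, the $4$\dash fan $(s_5, s_4, s_3, s_2)$ yields either $M \ba s_1 \ba s_5$, giving conclusion~(i), or $M \ba s_1 / s_2$; in $M \ba s_5$, the fan $(s_1, s_2, s_3, s_4)$ yields either $M \ba s_1 \ba s_5$ (again conclusion~(i)) or $M/s_4 \ba s_5$. The only remaining case is that $M \ba s_1 / s_2$ and $M/s_4 \ba s_5$ both have $N$\dash minors, which is conclusion~(ii). The main obstacle throughout is the fan-rotation step: verifying that a parallel or series pair induced by a single deletion or contraction genuinely permits the required rewriting of the minor representation, with careful bookkeeping across subcases on the location of each fan element in $D$, $C$, or $E(N)$, and subtle use of binary-matroid identities (symmetric-difference decomposition of circuits, and orthogonality of the fan's triangles and triads against cocircuits and circuits of $M$) to justify the element swaps.
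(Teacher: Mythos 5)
Your overall strategy---use the internal $4$\dash connectivity of $N$ to force the removal of some fan element, then rotate removals along the fan via parallel and series pairs, and assemble the $5$\dash fan case from two applications of the $4$\dash fan case---is the same as the paper's, and your final case analysis for the $5$\dash fan does produce exactly the stated alternatives. But the two rotation steps you single out as the crux do not work as you describe them. In the $4$\dash fan case, from $s_1\in C$ (so that $M/s_1$, and hence $M/s_1\ba s_2$ say, has an $N$\dash minor) you claim the representation "can be rewritten with $s_1\in D$". It cannot: that would require $\{s_1,s_3\}$ to be a circuit of $M\ba s_2$, but $\{s_1,s_3\}$ is a proper subset of the circuit $\{s_1,s_2,s_3\}$ and is therefore independent. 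The rotation propagates \emph{forward} along the fan, not back to $s_1$: deleting $s_2$ or $s_3$ turns the triad $\{s_2,s_3,s_4\}$ into a series pair, cosimplicity of $N$ forces a contraction there, and the series-pair isomorphism lands you at $M/s_4$---which is the \emph{other} admissible conclusion, not the contradiction you announce. The paper avoids this confusion by arguing directly (first the case where some fan element can be deleted keeping $N$, then the case where none can), with each swap being between the two elements of a genuine $2$\dash circuit or $2$\dash cocircuit.

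The more serious gap is in your proof that both $M\ba s_1$ and $M\ba s_5$ have $N$\dash minors. You pass from $M/s_4\ba s_5$ to $M/s_4\ba s_1$ "using the triangle $\{s_1,s_2,s_5\}$" of $M/s_4$. There is no such move: deleting distinct elements of a triangle gives, in general, non-isomorphic matroids; only a parallel pair lets you swap deletions, and only a series pair lets you swap contractions. (In addition, your $4$\dash circuit $\{s_1,s_2,s_4,s_5\}$ needs a split into $2$\dash circuits to be excluded, and $M$ is not assumed simple in this lemma.) The paper reaches the same conclusion by a chain of legitimate swaps: the parallel pair $\{s_3,s_5\}$ in $M/s_4$ gives $M\ba s_3$ an $N$\dash minor, the series pair $\{s_2,s_4\}$ in $M\ba s_3$ gives one to $M/s_2$, and the parallel pair $\{s_1,s_3\}$ in $M/s_2$ gives one to $M\ba s_1/s_2$; this also establishes the equivalence of the two matroids in conclusion (ii), which your argument never addresses. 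One further technicality you skip: before applying the $4$\dash fan result to $(s_5,s_4,s_3,s_2)$ inside $M\ba s_1$, you must check that $\{s_2,s_3,s_4\}$ is still a triad of $M\ba s_1$; the paper disposes of the degenerate case (where $s_2$ lies in a $1$\dash\ or $2$\dash element cocircuit of $M\ba s_1$) by noting it immediately yields the $N$\dash minor of $M\ba s_1/s_2$.
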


\begin{proof}
Let $(s_{1},s_{2},s_{3},s_{4})$ be a $4$\dash fan.
Since $\{s_{1},s_{2},s_{3},s_{4}\}$ contains	a circuit and a cocircuit,
$\lambda_{N_{0}}(\{s_{1},s_{2},s_{3},s_{4}\})	\leq 2$	for any minor,
$N_{0}$, of $E(M)$ that contains	$\{s_{1},s_{2},s_{3},s_{4}\}$ in
its ground set.
As $N$ is \ifc\ and $|E(N)|\geq 8$, we deduce that $N$ is obtained from
$M$ by removing at least one element of $\{s_{1},s_{2},s_{3},s_{4}\}$.
Let $x$ be an element in $\{s_{1},s_{2},s_{3},s_{4}\}-E(N)$.
If $M\ba x$ has an $N$\dash minor, then either $x=s_{1}$, as desired;
or $\{s_{2},s_{3},s_{4}\}-x$ is a $2$\dash cocircuit in $M\ba x$.
In the latter case, as $N$ is \ifc, either $x \in \{s_{2},s_{3}\}$, and
$M/s_{4}$ has an $N$\dash minor, as desired; or $x = s_{4}$, and
$M/s_{2}$ has an $N$\dash minor.
But $\{s_{1},s_{3}\}$ is a $2$\dash circuit of the last matroid, so
$M\ba s_{1}$ has an $N$\dash minor, and the lemma holds.
We may now suppose that deleting any element of
$\{s_{1},s_{2},s_{3},s_{4}\}$ from $M$ yields a matroid with no
$N$\dash minor.
Then $N$ is a minor of $M/x$ for some $x \in\{s_{1},s_{2},s_{3},s_{4}\}$. But $x$ is not in $\{s_{1},s_{2},s_{3}\}$, or else
$\{s_{1},s_{2},s_{3}\}-x$ is a $2$\dash circuit in $M/x$, and we
may delete one of its elements while keeping an $N$\dash minor.
Thus $x = s_{4}$, and the lemma holds.

Next we assume that
$(s_{1},s_{2},s_{3},s_{4},s_{5})$ is a $5$\dash fan in $M$.
First we show that $M\ba s_{1}/s_{2}$ has an $N$\dash minor
if and only if $M/s_{4}\ba s_{5}$ has an $N$\dash minor.
As $\{s_{1},s_{3}\}$ is a $2$\dash circuit of $M/s_{2}$,
it follows that if $M\ba s_{1}/s_{2}$ has an $N$\dash minor,
so does $M\ba s_{3}$.
As $\{s_{4},s_{5}\}$ is a $2$\dash cocircuit of the last matroid,
this implies that $M/s_{4}$ has an $N$\dash minor.
Hence so does $M/s_{4}\ba s_{5}$.
Thus $M/s_{4}\ba s_{5}$ has an $N$\dash minor if
$M\ba s_{1}/s_{2}$ does.
The converse statement yields
to a symmetrical argument.

Now  $(s_{1}, s_{2},s_{3},s_{4})$ is a $4$\dash fan of $M$.
By applying the first statement of the lemma, we see that
$M\ba s_{1}$ or $M/s_{4}$ has an $N$\dash minor.
In the latter case, $M/s_{4}\ba s_{5}$ has an
$N$\dash minor, and we are done.
Therefore we assume that $M\ba s_{1}$ has an $N$\dash minor.
There is a cocircuit of $M\ba s_{1}$ that contains $s_{2}$
and is contained in $\{s_{2},s_{3},s_{4}\}$.
If this cocircuit is not a triad, then $M\ba s_{1}/s_{2}$ has an
$N$\dash minor, and we are done.
Therefore we assume that
$(s_{5},s_{4},s_{3},s_{2})$ is a $4$\dash fan of $M\ba s_{1}$.
We apply the first statement of the lemma, and deduce that
either $M\ba s_{1}/s_{2}$ or $M\ba s_{1}\ba s_{5}$ has
an $N$\dash minor.
In either case the proof is complete.
\end{proof}

A \emph{quad} is a $4$\dash element circuit-cocircuit.
It is clear that if $Q$ is a quad, then
$\lambda(Q)\leq 2$.
The next result is easy to verify.

\begin{proposition}
\label{quad4fan}
Let $(X,Y)$ be a $3$\dash separation of a $3$\dash connected
binary matroid with $|X|=4$.
Then $X$ is a quad or a $4$\dash fan.
\end{proposition}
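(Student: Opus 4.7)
The plan is to first pin down $r(X)$ and $r^*(X)$, then let orthogonality do all the work.

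First, since $M$ is $3$\dash connected with $|E(M)|\geq 7$ and $(X,Y)$ is a $3$\dash separation, $\lambda(X)=2$, so via the identity
\[
\lambda(X) = r(X) + r^*(X) - |X|,
\]
I would deduce $r(X) + r^*(X) = 6$. To pin each summand down, I would invoke $3$\dash connectedness and binaryness: $M$ is simple and cosimple (so $r(X), r^*(X) \geq 2$), and a simple binary matroid has at most three points on any rank\dash $2$ flat, which rules out $r(X)=2$ for a $4$\dash element set. Dually $r^*(X)\neq 2$. Hence $r(X) = r^*(X) = 3$.

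Next, $r(X) = 3$ with $|X| = 4$ forces $X$ to contain a circuit $C$ of size $3$ or $4$ (sizes $1$ and $2$ being excluded by $3$\dash connectedness), and dually a cocircuit $C^*\subseteq X$ of size $3$ or $4$. I would split on $|C|$. If $|C|=4$, then $C=X$; orthogonality requires $|C\cap C^*|$ even, so $|C^*|\neq 3$ (which would give intersection of size $3$), forcing $C^*=X$, and $X$ is a quad. If $|C|=3$, write $C=\{s_1,s_2,s_3\}$; now $C^*=X$ is impossible (intersection would be $C$, of size $3$), so $|C^*|=3$. Orthogonality together with $|C\cup C^*|\leq 4$ forces $|C\cap C^*|=2$, and writing $C^*=\{s_2,s_3,s_4\}$ exhibits $(s_1,s_2,s_3,s_4)$ as a $4$\dash fan.

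There is no genuine obstacle here: the single nontrivial observation is that binaryness rules out $r(X)=2$, after which the rank count is tight and orthogonality disposes of both cases immediately. This matches the author's assertion that the proposition is easy to verify.
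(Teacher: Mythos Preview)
Your proof is correct and is precisely the kind of direct verification the paper has in mind; the paper omits the argument entirely, stating only that the result ``is easy to verify.'' Your use of $\lambda(X)=r(X)+r^{*}(X)-|X|$ together with the binary bound of three points per line to force $r(X)=r^{*}(X)=3$, followed by the orthogonality case split, is the standard route and needs no adjustment.
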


The next result is Lemma~2.2 in \cite{CMO12}.

\begin{lemma}
\label{quadiso}
Let $Q$ be a quad in a binary matroid $M$.
If $x$ and $y$ are in $Q$, then $M\ba x$ and $M\ba y$
are isomorphic.
\end{lemma}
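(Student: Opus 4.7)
The plan is to exhibit an explicit matroid isomorphism $\sigma : M \ba x \to M \ba y$. Write $\{z, w\} = Q \setminus \{x, y\}$ and let $\sigma : E(M) \setminus \{x\} \to E(M) \setminus \{y\}$ be the bijection that sends $y$ to $x$, swaps $z$ and $w$, and fixes every other element. I would verify that $\sigma$ carries circuits of $M \ba x$ to circuits of $M \ba y$: since the circuits of $M \ba x$ are precisely those circuits of $M$ that avoid $x$ (and similarly for $M \ba y$), it suffices to analyse a circuit $C$ of $M$ with $x \notin C$ and argue that $\sigma(C)$ is also a circuit of $M$.

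Because $C$ is a circuit and $Q$ is a cocircuit of the binary matroid $M$, $|C \cap Q|$ is even; combined with $x \notin C$, this forces $|C \cap Q| \in \{0, 2\}$. When $C \cap Q$ is empty or equals $\{z, w\}$, the map $\sigma$ either fixes $C$ pointwise or merely interchanges the two elements of $C \cap Q$, so $\sigma(C) = C$ and there is nothing to prove. The substantive cases are $C \cap Q = \{y, z\}$ and $C \cap Q = \{y, w\}$; in either a direct computation gives $\sigma(C) = C \symdif Q$.

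The crux is then the subclaim that $C \symdif Q$ is actually a single circuit of $M$. Since $M$ is binary, $C \symdif Q$ decomposes as a disjoint union of circuits $C_1 \sqcup C_2 \sqcup \cdots$; each $|C_i \cap Q|$ is even, while $(C \symdif Q) \cap Q = Q \setminus (C \cap Q)$ has size exactly two. So one $C_i$ accounts for both of these elements, and any other $C_i$ is disjoint from $Q$ and hence contained in $C \setminus (C \cap Q)$, a proper subset of the circuit $C$ and therefore independent --- a contradiction. Thus $C \symdif Q$ is a single circuit, and running the symmetric argument for $\sigma^{-1}$ finishes the proof. I expect the one delicate step to be this decomposition argument: it is what upgrades the binary fact ``$C \symdif Q$ is a cycle'' to ``$C \symdif Q$ is a circuit'', and it uses the cocircuit half of the quad hypothesis in an essential way; once it is in place, the rest is routine case analysis on $C \cap Q$.
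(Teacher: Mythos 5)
Your proposal is correct and complete: the case analysis on $C\cap Q$ is exhaustive, the identification $\sigma(C)=C\symdif Q$ in the two nontrivial cases is right, and the parity/properness argument showing $C\symdif Q$ is a single circuit (rather than just a disjoint union of circuits) is exactly the delicate point and is handled properly. The paper itself gives no proof, citing Lemma~2.2 of \cite{CMO12}, and the argument there is essentially this same explicit bijection fixing $E(M)-Q$ together with the symmetric-difference computation, so your route matches the intended one.
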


A matroid is \emph{$(4,k)$\dash connected} if
it is $3$\dash connected, and, whenever $(X,Y)$ is a
$3$\dash separation, either $|X|\leq k$ or $|Y|\leq k$.
A matroid is \emph{\ifc} precisely when it is
$(4,3)$\dash connected.
If a matroid is $3$\dash connected, but not
$(4,k)$\dash connected, then it contains a $3$\dash separation,
$(X,Y)$, such that $|X|,|Y|> k$.
We will call such a $3$\dash separation a
\emph{$(4,k)$\dash violator}.

For $n\geq 3$, we let $G_{n+2}$ denote the
\emph{biwheel} graph with $n+2$ vertices.
Thus $G_{n+2}$ consists of a cycle
$v_{1},v_{2},\ldots, v_{n}$, and two additional
vertices, $u$ and $v$, each of which is adjacent to every vertex in
$\{v_{1},v_{2},\ldots, v_{n}\}$.
The planar dual of a biwheel is a \emph{cubic planar ladder}.
We construct $G_{n+2}^{+}$ by adding an edge between
$u$ and $v$.
It is easy to see that $M(G_{n+2}^{+})$ is
represented over $\mathrm{GF}(2)$ by
the following matrix
\[
\left[
\begin{array}{c|cc}
&\mathbf{1}&\mathbf{0}\\[-1ex]
I_{n+1}&&\\[-1ex]
&I_{n}&A_{n}\\
\end{array}
\right]
\]
where $A_{n}$ is the $n\times n$ matrix
\[
\begin{bmatrix}
1&0&0&\cdots&1\\
1&1&0&\cdots&0\\
0&1&1&\cdots&0\\
\vdots&\vdots&\vdots&\ddots&\vdots\\
0&0&0&\cdots&1
\end{bmatrix}
\]
and $\mathbf{1}$ and $\mathbf{0}$ are
$1\times n$ vectors with all entries equal to
$1$ or $0$ respectively.
Thus $M(G_{n+2}^{+})$ is precisely equal to
the matroid $D_{n}$, as defined by Zhou~\cite{zhou},
and the element $f_{1}$ of $E(D_{n})$ is the
edge $uv$.

For $n\geq 2$ let $\Delta_{n+1}$ be the
rank\dash$(n+1)$ binary matroid
represented by the following matrix.
\[
\left[
\begin{array}{c|cc}
&\mathbf{1}&e_{n}\\[-1ex]
I_{n+1}&&\\[-1ex]
&I_{n}&A_{n}\\
\end{array}
\right]
\]
In this case, $e_{n}$ is the
standard basis vector
with a one in position $n$.
Then $\Delta_{n+1}$ is a \emph{triangular M\"{o}bius matroid}
(see \cite{MRW10}).
In \cite{zhou}, the notation $D^{n}$ is used for the
matroid $\Delta_{n+1}$, and $f_{1}$ denotes the element
represented by the first column in the matrix.
We use $z$ to denote the same element.
We observe that $\Delta_{n+1}\ba z$ is the bond matroid of a
\emph{M\"{o}bius cubic ladder}.

The next result is a consequence of a theorem
due to Zhou~\cite{zhou}.

\begin{theorem}
\label{zhou}
Let $M$ and $N$ be \ifc\ binary matroids such that
$N$ is a proper minor of $M$ satisfying $|E(N)|\geq 7$.
Then either
\begin{enumerate}[(i)]
\item $M\ba e$ or $M/e$ is $(4,4)$\dash connected with
an $N$\dash minor, for some element $e\in E(M)$, or
\item $M$ or $M^{*}$ is isomorphic to
either $M(G_{n+2})$, $M(G_{n+2}^{+})$,
$\Delta_{n+1}$, or $\Delta_{n+1}\ba z$, for
some $n\geq 4$.
\end{enumerate}
\end{theorem}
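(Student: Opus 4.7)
The plan is to invoke the splitter-style theorem of Zhou~\cite{zhou} directly and translate its conclusion into the language of this paper. First, I would apply Zhou's theorem to the pair $(M,N)$; under our hypotheses, this should yield a dichotomy of the following form: either $M$ has a single-element deletion or contraction that retains an $N$\dash minor and satisfies some connectivity condition close to internal $4$\dash connectedness, or $M$ (up to duality) belongs to an exceptional family indexed by a positive integer $n$.

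Second, I would match up the exceptional matroids using the notational dictionary already set up in the text. The matroid $D_n$ of \cite{zhou} is identified with $M(G_{n+2}^{+})$ via the shared binary matrix representation, and analogously $D^n=\Delta_{n+1}$. The other two matroids appearing in conclusion (ii), namely $M(G_{n+2})$ and $\Delta_{n+1}\ba z$, should then correspond to $D_n\ba uv$ and $D^n\ba f_1$ respectively in Zhou's notation, where $uv$ is the extra rung of the biwheel and $z=f_1$ is the distinguished column of the triangular M\"obius representation.

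Third, I would reconcile Zhou's connectivity notion with the $(4,4)$\dash connected notion used here. Since $(4,4)$\dash connected is strictly weaker than \ifc, Zhou's guarantee of a near-$4$\dash connected single\dash element minor should readily imply $(4,4)$\dash connectedness. Propositions~\ref{fans3sep} and~\ref{quad4fan} are likely useful here, as they describe precisely the structure of a $4$\dash element side of a $3$\dash separation in a $3$\dash connected binary matroid, and hence clarify what a small $3$\dash separator in $M\ba e$ or $M/e$ must look like.

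The main obstacle I anticipate is Step 3: carefully verifying that the precise connectivity conclusion of Zhou's theorem, whatever its exact wording, coincides with or implies $(4,4)$\dash connectedness on the matroids of size at least $7$ that appear here. Once this alignment is established, the theorem follows directly from \cite{zhou}, with the exceptional families matching those in conclusion (ii) under the translation above.
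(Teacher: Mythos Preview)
Your overall approach---invoke Zhou's theorem from \cite{zhou} and translate notation---is exactly what the paper does; indeed, the paper offers no proof beyond the citation and the notational identifications $D_{n}=M(G_{n+2}^{+})$ and $D^{n}=\Delta_{n+1}$ that you describe. Your Step~3 worry about reconciling connectivity notions turns out to be unnecessary: Zhou's conclusion already matches the $(4,4)$\dash connected language used here, so no translation via Propositions~\ref{fans3sep} or~\ref{quad4fan} is required.

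There is, however, one genuine gap you have not addressed. Zhou's theorem in \cite{zhou} is stated only under the hypothesis $|E(N)|\geq 10$, not $|E(N)|\geq 7$. The paper handles this discrepancy by noting (immediately after the statement) that Zhou himself explains how to close the gap: one combines results from \cite{GZ06} and \cite{Zho04} with a relatively simple case-analysis to push the bound down to $7$. Your proposal, as written, would only establish the theorem for $|E(N)|\geq 10$; to complete it you must either reproduce that case-analysis or, as the paper does, explicitly cite Zhou's remark together with \cite{GZ06} and \cite{Zho04}.
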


Note that the theorem in \cite{zhou} is stated
with the weaker hypothesis that
$|E(N)|\geq 10$.
However, Zhou explains that by using results from
\cite{GZ06} and \cite{Zho04} and performing a
relatively simple case-analysis, we can strengthen the
theorem so that it holds under the condition that $|E(N)|\geq 7$.

\section{Proof of the main theorem}

In this section we prove Theorem~\ref{main}.
Throughout the section, we assume that the theorem is
false.
This means that there exist \ifc\ binary matroids,
$\bar{M}$ and $\bar{N}$, with the following properties:
\begin{enumerate}[(i)]
\item $\bar{M}$ has a proper $\bar{N}$\dash minor,
\item if $e$ is in a triangle of $\bar{M}$, then
$\bar{M}\ba e$ has no $\bar{N}$\dash minor,
\item if $e$ is in a triad of $\bar{M}$, then
$\bar{M}/ e$ has no $\bar{N}$\dash minor,
\item there is no \ifc\ minor, $M'$, of $\bar{M}$ such that $M'$
has an $\bar{N}$-minor and $1\leq |E(\bar{M})|-|E(M')| \leq 2$, and
\item $|E(\bar{N})|\geq 7$.
\end{enumerate}

Note that $(\bar{M}^{*},\bar{N}^{*})$ also provides a counterexample
to Theorem~\ref{main}.
We start by showing that we can assume
$|E(\bar{N})|\geq 8$.
If $|E(\bar{N})|=7$, then 
$\bar{N}$ is isomorphic to $F_{7}$ or $F_{7}^{*}$.
Then $\bar{M}$ is non-regular, and contains one of the five
\ifc\ non-regular matroids $N_{10}$,
$\widetilde{K}_{5}$,
$\widetilde{K}_{5}^{*}$,
$T_{12}\ba e$,
or $T_{12}/e$
as a minor
\cite[Corollary~1.2]{Zho04}.
But $N_{10}$ contains an element in a triangle
whose deletion is non-regular, so $M$ is
not isomorphic to $N_{10}$.
The same statement applies to $\widetilde{K}_{5}$
and $T_{12}/e$, so $\bar{M}$ is not isomorphic to
these matroids, or their duals,
$\widetilde{K}_{5}^{*}$
and $T_{12}\ba e$.
Thus $\bar{M}$ has a proper \ifc\ minor, $N'$, isomorphic to one
of the five matroids listed above.
Therefore we can relabel $N'$ as $\bar{N}$.
As each of the five matroids has more than seven elements,
we are justified in assuming that $|E(\bar{N})|\geq 8$.
As $(\bar{M},\bar{N})$ provides a counterexample
to Theorem~\ref{main}, it follows that $|E(\bar{M})|\geq 11$.

\begin{lemma}
\label{trianglespersist}
Let $(M,N)$ be $(\bar{M},\bar{N})$ or
$(\bar{M}^{*},\bar{N}^{*})$.
Let $N_{0}$ be an arbitrary $N$\dash minor of $M$.
If $T$ is a triangle or a triad of $M$, then
$T\subseteq E(N_{0})$.
\end{lemma}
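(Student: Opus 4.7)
My plan is to handle triangles and triads uniformly via duality. The hypotheses of Theorem~\ref{main} (and their failure) are self-dual, so $(\bar{M}^{*},\bar{N}^{*})$ is also a counterexample. Since a triangle of $M^{*}$ is a triad of $M$, and minors of $M^{*}$ correspond to minors of $M$, the statement of the lemma for triads of $\bar{M}$ is equivalent to its statement for triangles of $\bar{M}^{*}$. So it will suffice to prove the following single claim: every triangle $T$ of $\bar{M}$ is contained in $E(N_{0})$ for every $\bar{N}$\dash minor $N_{0}$ of $\bar{M}$.

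To prove this, I would fix such a triangle $T=\{e,f,g\}$ and write $N_{0}=\bar{M}/C\ba D$, where $C$ and $D$ partition $E(\bar{M})-E(N_{0})$, then argue by contradiction from the assumption that $T$ meets $C\cup D$. The first observation is that no element of $T$ can lie in $D$: if $x\in T\cap D$, then $N_{0}$ would be a minor of $\bar{M}\ba x$, contradicting hypothesis (ii) since $x$ lies in a triangle. So $T\cap(C\cup D)\subseteq C$, and in particular some element $e$ of $T$ is in $C$. Because $T$ is a circuit, $\{f,g\}$ is a $2$\dash circuit of $\bar{M}/e$, and it remains so in any minor that retains both elements. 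Since $\bar{N}$ is \ifc\ with $|E(\bar{N})|\geq 8$, $N_{0}$ is simple, so one of $f,g$ must be missing from $E(N_{0})$; say $g\notin E(N_{0})$, whence $g\in C$ by the previous step. Now $f$ is a loop of $\bar{M}/\{e,g\}$ (again because $T$ is a circuit), and $N_{0}$ has no loops, so $f\notin E(N_{0})$ and $f\in C$. Hence $T\subseteq C$.

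The final step is to derive the contradiction by converting a contraction into a deletion: since $g$ is a loop of $\bar{M}/\{e,f\}$, we have $\bar{M}/T=\bar{M}/\{e,f\}\ba g=\bar{M}\ba g/\{e,f\}$, which exhibits $N_{0}$ as a minor of $\bar{M}\ba g$. This contradicts hypothesis (ii) because $g$ lies in the triangle $T$. The main obstacle is simply the bookkeeping between $C$ and $D$ while swapping contractions for a deletion; this is handled cleanly by the standard identity that deleting and contracting a loop produce the same matroid.
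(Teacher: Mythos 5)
Your proof is correct and takes essentially the same route as the paper's: reduce to triangles by duality, use hypothesis (ii) to force the missing triangle elements into the contraction set, and then use the parallel pair (or loop) created by contracting triangle elements to convert a contraction into a deletion of a triangle element, contradicting (ii). The paper reaches the contradiction slightly more quickly (one element in $C$ already forces $N_{0}$ to be a minor of $M\ba f$ or $M\ba g$), but the arguments are the same in substance.
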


\begin{proof}
By duality, we can assume that $\{e,f,g\}$ is a triangle
of $M$, and that $e\notin E(N_{0})$.
Since $M\ba e$ has no minor isomorphic to $N$,
it follows that $N_{0}$ is a minor of $M/e$.
As $\{f,g\}$ is a $2$\dash circuit in $M/e$,
it follows that
$N_{0}$ is a minor of either $M/e\ba f$ or
$M/e\ba g$, and hence of $M\ba f$ or $M\ba g$.
But neither of these matroids has an $N$\dash minor,
so we have a contradiction.
\end{proof}

\begin{lemma}
\label{candeletee}
There is an element $e\in E(\bar{M})$ such that
either $\bar{M}\ba e$ or $\bar{M}/ e$ is
$(4,4)$\dash connected with an $\bar{N}$\dash minor.
\end{lemma}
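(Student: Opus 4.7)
The plan is to apply Theorem~\ref{zhou} to the pair $(\bar{M}, \bar{N})$. Since $\bar{N}$ is a proper $\bar{N}$\dash minor of $\bar{M}$ and the reduction at the start of this section gives $|E(\bar{N})|\geq 8$, the hypotheses of Theorem~\ref{zhou} are satisfied. If conclusion~(i) of that theorem holds, then the desired element $e$ is produced directly and the lemma is proved. So the work to do is to derive a contradiction from conclusion~(ii); that is, to show that neither $\bar{M}$ nor $\bar{M}^{*}$ can be isomorphic to $M(G_{n+2})$, $M(G_{n+2}^{+})$, $\Delta_{n+1}$, or $\Delta_{n+1}\ba z$ for any $n \geq 4$.

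The key observation driving the contradiction is that in each of these eight candidate matroids, every element of $\bar{M}$ lies in a triangle or a triad of $\bar{M}$. For the graphic cases $M(G_{n+2})$ and $M(G_{n+2}^{+})$, every rim edge lies in a triangle with two adjacent spokes, every spoke lies in a triangle with a rim edge and a consecutive spoke, and the hub edge $uv$ in $G_{n+2}^{+}$ lies in a triangle with the two spokes meeting at any rim vertex. For $\Delta_{n+1}\ba z$, which is the bond matroid of the M\"{o}bius cubic ladder, the star of any vertex is a $3$\dash bond of the ladder, and hence a triangle of the bond matroid, because the ladder is $3$\dash regular and $3$\dash connected. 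For $\Delta_{n+1}$, I would perform a direct check against the displayed matrix to exhibit a triangle through each column; for instance, $z = f_{1}$ lies in the triangle $\{f_{1},\, f_{i+1},\, g_{i}\}$, where $g_{i}$ is the column in the middle block with top entry $1$ and $e_{i}$ beneath, and analogous triangles take care of the remaining columns. Dualising yields the corresponding statement about triads for $M^{*}(G_{n+2})$, $M^{*}(G_{n+2}^{+})$, $\Delta_{n+1}^{*}$, and $(\Delta_{n+1}\ba z)^{*}$.

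With the covering property in hand, the contradiction is immediate. Applying Lemma~\ref{trianglespersist} to $(\bar{M},\bar{N})$, every triangle and every triad of $\bar{M}$ must be contained in the ground set of any $\bar{N}$\dash minor $N_{0}$ of $\bar{M}$. Hence $E(N_{0}) = E(\bar{M})$ and so $N_{0} = \bar{M}$, contradicting the assumption that $\bar{N}$ is a proper minor of $\bar{M}$. The only step that requires any real bookkeeping is the verification that every column of $\Delta_{n+1}$ lies in a triangle; the rest of the argument is just the combination of Theorem~\ref{zhou} with Lemma~\ref{trianglespersist}, so I expect the matrix check for $\Delta_{n+1}$ to be the main (but still routine) obstacle.
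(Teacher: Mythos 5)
Your proposal is correct and follows exactly the paper's argument: invoke Theorem~\ref{zhou}, note that in outcome~(ii) every element of $\bar{M}$ lies in a triangle or a triad, and then apply Lemma~\ref{trianglespersist} to force $E(\bar{N}_0)=E(\bar{M})$, contradicting properness of the minor. The paper dismisses the triangle/triad covering check as ``easy to verify,'' so your extra detail on the ladder, biwheel, and M\"{o}bius cases is just an expansion of the same step.
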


\begin{proof}
If the lemma fails, then by Theorem~\ref{zhou}, either
$\bar{M}$ or its dual is isomorphic to one of
$M(G_{n+2})$, $M(G_{n+2}^{+})$,
$\Delta_{n+1}$, or $\Delta_{n+1}\ba z$, for
some $n\geq 4$.
In this case it is easy to verify that every element of $E(\bar{M})$
is contained in a triangle or a triad.
Therefore Lemma~\ref{trianglespersist} implies that
$E(\bar{M})=E(\bar{N})$, contradicting the fact that $\bar{N}$ is a
proper minor of $\bar{M}$.
\end{proof}

If $e$ is an element such that $\bar{M}\ba e$ is $(4,4)$\dash connected
with an $\bar{N}$\dash minor, then $\bar{M}\ba e$
has a quad or a $4$\dash fan, for otherwise it follows from
Proposition~\ref{quad4fan} that $\bar{M}\ba e$ is \ifc,
contradicting the fact that $\bar{M}$ is a counterexample
to Theorem~\ref{main}.
We will make frequent use of the following fact.

\begin{proposition}
\label{zhou2.15}
Let $(M,N)$ be either $(\bar{M},\bar{N})$ or $(\bar{M}^{*},\bar{N}^{*})$.
If $M\ba e$ is $3$\dash connected and has an $N$\dash minor,
and $(X,Y)$ is a $3$\dash separation of $M\ba e$ such that
$|Y|=5$, then $Y$ is a $5$\dash cofan of $M\ba e$.
\end{proposition}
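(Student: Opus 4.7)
The plan is to pin down the structure of $Y$ through a sequence of connectivity observations and then rule out every candidate structure other than a $5$\dash cofan.

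First I would show that $\lambda_M(Y) = 3$ and $e \notin \cl_M(X)$. Since $M$ is \ifc\ and both $|Y| = 5$ and $|X \cup \{e\}| \geq 6$ exceed $3$ (using $|E(M)| \geq 11$), we have $\lambda_M(Y) \geq 3$; since deletion drops $\lambda$ by at most one, $\lambda_M(Y) \leq \lambda_{M \ba e}(Y) + 1 = 3$. The resulting equality forces $e \notin \cl_M(X)$.

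Next, for every $y \in Y$ I would establish that $y \in \cl_{M \ba e}(Y - y) \cup \cl^{*}_{M \ba e}(Y - y)$; otherwise the standard formula gives $\lambda_{M \ba e}(Y - y) = 1$, contradicting the $3$\dash connectedness of $M \ba e$ (as $|Y - y| = 4$ and $|X \cup \{y\}| \geq 6$). For any $y$ lying in exactly one of the two, $Y - y$ is an exactly $3$\dash separating $4$\dash element subset of $M \ba e$, so Proposition~\ref{quad4fan} forces $Y - y$ to be a quad or a $4$\dash fan of $M \ba e$.

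I would then enumerate how the remaining element $y$ attaches to $Y - y$. The possible outcomes are that $Y$ carries a fan ordering in $M \ba e$ (a $5$\dash fan or a $5$\dash cofan), or that $Y$ contains a $4$\dash element quad $Q$. The quad case is handled by observing that either $Q$ is a quad of $M$, in which case $\lambda_M(Q) \leq 2$ with $M$ being \ifc\ and $|E(M)| \geq 11$ is impossible, or $Q \cup \{e\}$ is a $5$\dash cocircuit of $M$, in which case a similar connectivity computation combined with the orthogonality between the triangles in $Y$ and the cocircuits of $M$ either produces a fan ordering of $Y$ or contradicts $M$ being \ifc. The subcase in which every $y \in Y$ lies in $\cl \cap \cl^*$ (so the preceding step produces no direct $4$-fan) again forces $Y$ to contain a quad, which is dispatched the same way.

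Finally I would rule out the $5$\dash fan alternative: if $Y = (s_1, s_2, s_3, s_4, s_5)$ is a $5$\dash fan of $M \ba e$, then Lemma~\ref{minorsof45fans} (applicable since $|E(N)| \geq 8$) produces an $N$\dash minor of $M \ba e \ba s_1$, and hence of $M \ba s_1$. But $s_1$ is in the triangle $\{s_1, s_2, s_3\}$ of $M \ba e$, which is also a triangle of $M$, contradicting hypothesis (ii) of the counterexample (or hypothesis (iii) dually, if we are working with $(\bar{M}^{*}, \bar{N}^{*})$). Hence $Y$ is a $5$\dash cofan. The main obstacle is the case analysis in the third paragraph: classifying the attachments of $y$ to a $4$\dash fan or quad $Y - y$, and ruling out the non-fan configurations using binary orthogonality together with the size and internal connectivity of $M$.
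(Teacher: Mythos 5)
Your overall route is the same as the paper's: reduce to the dichotomy ``$Y$ is a $5$\dash element fan or $Y$ contains a quad of $M\ba e$,'' kill the quad case, and kill the $5$\dash fan case by applying Lemma~\ref{minorsof45fans} to get an $N$\dash minor of $M\ba s_{1}$ with $s_{1}$ in a triangle of $M$, contradicting Lemma~\ref{trianglespersist}. That last step is exactly the paper's argument and is correct. The dichotomy itself is only asserted in your third paragraph (``enumerate how $y$ attaches to $Y-y$''); the paper does not reprove it but cites Zhou's Lemma~2.14, so I will treat that as acceptable, though as written your enumeration is not a proof.

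The genuine gap is in the quad case. After correctly observing that $Q$ cannot be a quad of $M$ (so $Q\cup e$ is a cocircuit of $M$), you claim the configuration ``either produces a fan ordering of $Y$ or contradicts $M$ being \ifc.'' That is false: writing $Y=Q\cup\{w\}$, the element $w$ must lie in $\cl_{M\ba e}(Q)\cup\cl^{*}_{M\ba e}(Q)$, and orthogonality with the circuit and cocircuit $Q$ of $M\ba e$ typically yields a triangle (or triad) of $M$ containing $w$ and two elements of $Q$. Nothing about the internal $4$\dash connectivity of $M$ forbids this: by your own first paragraph $\lambda_{M}(Y)=3$, and such a triangle meets the cocircuit $Q\cup e$ of $M$ in two elements, so orthogonality is satisfied as well. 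The quad case cannot be dispatched by connectivity and orthogonality alone; without the $N$\dash minor hypotheses the proposition is simply not a connectivity fact. The paper's proof (following Zhou's Lemma~2.15) instead uses that a quad of $M\ba e$ is a $4$\dash element $3$\dash separating set and hence cannot lie in the ground set of any $N$\dash minor of $M\ba e$; then Lemma~\ref{quadiso} (or its dual) moves the removed element of $Q$ onto an element of the triangle (or triad) of $M$ found inside $Y$, producing a deletion from a triangle (or contraction from a triad) of $M$ that preserves an $N$\dash minor, contradicting Lemma~\ref{trianglespersist}. Some version of this argument is indispensable, and your proposal omits it.
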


\begin{proof}
If $Y$ is not a fan, then $Y$ contains a quad
(see \cite[Lemma~2.14]{zhou}).
As in the proof of \cite[Lemma~2.15]{zhou}, we can show that
in $M$, there is either a triangle or a triad of $M$ that is contained
in $Y$ and which contains two elements from the quad.
In the first case, the triangle contains an element we can delete
to keep an $N$\dash minor.
In the second case, the triad contains an element we can contract
and keep an $N$\dash minor.
In either case, we have a contradiction to
Lemma~\ref{trianglespersist}.
Therefore $Y$ is a $5$\dash element fan.
If $Y$ is a $5$\dash fan, then by
Lemma~\ref{minorsof45fans}, we can delete an element
from a triangle in $M\ba e$ and preserve an
$N$\dash minor.
This contradicts Lemma~\ref{trianglespersist}, so
$Y$ is a $5$\dash cofan.
\end{proof}

At this point, we give a quick summary of the lemmas that follow.
Lemma~\ref{fanlemma} considers the matroid produced by contracting
the last element of a $4$\dash fan in $M\ba e$.
Lemma~\ref{quad44con} deals with deleting an element from a quad
in $M\ba e$.
In Lemma~\ref{nodeletequads} we show that whenever we delete
such an element,
we destroy all $N$\dash minors.
We exploit this information in Lemma~\ref{no4fans}, and show that
$M\ba e$ has no $4$\dash fans.
The only case left to consider is one in which we contract
an element from a quad in $M\ba e$.
This case is covered in Lemma~\ref{conquad}.
After this lemma, there is only a small amount
of work to be done before we obtain a final contradiction
and complete the proof.

\begin{lemma}
\label{fanlemma}
Let $(M,N)$ be either $(\bar{M},\bar{N})$ or $(\bar{M}^{*},\bar{N}^{*})$.
Assume that $e$ is an element of $M$ such that
$M\ba e$ is $(4,4)$\dash connected with an $N$\dash minor, and that
$(a,b,c,d)$ is a $4$\dash fan of $M\ba e$.
Then $M\backslash e/d$ is $3$-connected with an $N$\dash minor,
and $M/d$ is $(4,4)$\dash connected.
Moreover, if $(X,Y)$ is a \ftv\ of $M/d$ such that
$|X\cap\{a,b,c\}|\geq 2$, then $Y$ is a quad of $M/d$, and
$Y\cap\{a,b,c,e\}=\{e\}$.
\end{lemma}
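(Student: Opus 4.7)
The plan is to establish the four assertions in sequence: (A) $M\ba e/d$ has an $N$-minor; (B) $M\ba e/d$ is $3$-connected; (C) $M/d$ is $(4,4)$-connected; and (D) the moreover clause about $(4,3)$-violators. For (A), apply Lemma~\ref{minorsof45fans} to the $4$-fan $(a,b,c,d)$ of $M\ba e$: either $M\ba e\ba a$ or $M\ba e/d$ has an $N$-minor. Since $\{a,b,c\}$ is a triangle of $M$, Lemma~\ref{trianglespersist} forces $a$ to lie in every $N$-minor of $M$, excluding $M\ba e\ba a$, so $M\ba e/d$ has an $N$-minor.

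For (B), suppose $(X,Y)$ is a $2$-separation of $M\ba e/d$. The rank identity $\lambda_{M\ba e}(Z)=\lambda_{(M\ba e)/d}(Z)+[d\in\cl_{M\ba e}(Z)]$, applied to $Z=X$ and $Z=Y$ together with the $3$-connectedness of $M\ba e$, forces $d\in\cl(X)\cap\cl(Y)$ and makes both $(X,Y\cup d)$ and $(X\cup d,Y)$ $3$-separations of $M\ba e$. By $(4,4)$-connectedness of $M\ba e$ and $|E(M\ba e)|\geq 10$, we may assume $|X|\leq 3$ up to swapping $X$ and $Y$. In the case $|X|=2$, the parallel pair in $M\ba e/d$ yields a triangle $\{x_1,x_2,d\}$ of $M\ba e$; orthogonality with the triad $\{b,c,d\}$ places (after relabelling) $x_1=b$ and $x_2\notin\{a,b,c,d\}$, so that $(x_2,d,b,c,a)$ is a $5$-fan of $M\ba e$. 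Lemma~\ref{minorsof45fans} applied to this $5$-fan contradicts Lemma~\ref{trianglespersist} applied to both triangles $\{x_2,d,b\}$ and $\{a,b,c\}$ of $M$. When $|X|=3$, $X$ is a triangle or a triad of $M\ba e/d$: triangles reduce (via the symmetric difference of the forced $4$-circuit $X\cup d$ with $\{a,b,c\}$) to a triangle through $d$, and hence to the previous case; triads satisfy $|X\cap\{a,b,c\}|\in\{0,2\}$ by orthogonality with $\{a,b,c\}$, and are ruled out by submodularity with the $4$-fan (when $|X\cap\{a,b,c\}|=2$: yielding a $5$-element $3$-separating set contradicting $(4,4)$-connectedness) or by orthogonality between the forced $4$-circuit $X\cup d$ and $\{b,c,d\}$ (when $X\cap\{a,b,c\}=\emptyset$).

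For (C), first observe that $M/d$ is $3$-connected: a size-$2$ side of a purported $2$-separation gives either a $2$-cocircuit of $M$ (impossible) or a triangle of $M$ through $d$, the latter contradicting Lemma~\ref{trianglespersist} since by (A) $M/d$ has an $N$-minor not containing $d$; and a $2$-separation with both sides $\geq 3$ lifts to a $3$-separation of $M$ forcing $|E(M)|\leq 7$, which contradicts $|E(M)|\geq 11$. To exclude $(4,4)$-violators, suppose $(X,Y)$ is one with $|X|,|Y|\geq 5$: if $d\notin\cl_M(X)$ or $d\notin\cl_M(Y)$, lifting gives a $3$-separation of $M$ with both sides $\geq 5$, violating internal $4$-connectedness. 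The double-closure case $d\in\cl_M(X)\cap\cl_M(Y)$ is the main obstacle: here we exploit that the cocircuit of $M$ restricting to $\{b,c,d\}$ in $M\ba e$ (namely $\{b,c,d\}$ or $\{b,c,d,e\}$) is constrained by orthogonality with the triangle $\{a,b,c\}$, and submodularity between $(X,Y\cup d)$ and the $4$-fan $\{a,b,c,d\}$ produces an auxiliary $3$-separating set of $M\ba e$ whose size violates the $(4,4)$-connectedness of $M\ba e$.

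For (D), let $(X,Y)$ be a $(4,3)$-violator of $M/d$ with $|X\cap\{a,b,c\}|\geq 2$; by (C), one side has size exactly $4$. Using that $\{a,b,c\}$ is a triangle of $M/d$ and the binary triangle-closure principle, any element of $\{a,b,c\}\cap Y$ lies in $\cl_{M/d}(X)$ and can be absorbed into $X$ without increasing $\lambda$; this gives $\{a,b,c\}\subseteq X$ and $|Y|=4$. Proposition~\ref{quad4fan} then makes $Y$ a quad or a $4$-fan of $M/d$; the $4$-fan case is excluded because Lemma~\ref{minorsof45fans} applied to $Y$ would yield an $N$-preserving deletion or contraction of an element of $Y$ lying in a triangle or triad of $M$, contradicting Lemma~\ref{trianglespersist}. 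So $Y$ is a quad. Finally, if $e\notin Y$, then $Y$ would be a quad of $M\ba e/d$ disjoint from the fan $\{a,b,c,d\}$, and Lemma~\ref{quadiso} combined with (A) would furnish an \ifc\ minor $M'$ of $\bar M$ with an $N$-minor and $1\leq|E(\bar M)|-|E(M')|\leq 2$, contradicting the counterexample status of $(\bar M,\bar N)$. The principal obstacle throughout is the double-closure case in (C).
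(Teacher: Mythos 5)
There are genuine gaps, concentrated exactly where the paper has to work hardest. First, your step (C): after reducing to the ``double-closure'' case $d\in\cl_{M}(X)\cap\cl_{M}(Y)$, you assert that submodularity between $(X,Y\cup d)$ and the fan $\{a,b,c,d\}$ produces a $3$\dash separating set of $M\ba e$ whose size contradicts $(4,4)$\dash connectedness. That mechanism does not close the argument. Tracking the closure moves carefully (and using that $\{b,c,d,e\}$ must be a cocircuit of $M$ --- a fact you leave open, but which is forced since otherwise $(a,b,c,d)$ is a $4$\dash fan of $M$, contradicting Proposition~\ref{fans3sep}), one is left with the surviving case $|Y|=5$, where $Y$ is a $5$\dash element $3$\dash separating set of $M/d$ meeting $\{a,b,c\}$ in one element. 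This case is fully consistent with the $(4,4)$\dash connectedness of both $M$ and $M\ba e$; it is eliminated only by identifying $Y$ as a $5$\dash fan via Proposition~\ref{zhou2.15}, using orthogonality with $\{b,c,d,e\}$ to force the triangle-end of that fan into $\{a,b,c\}$, and then invoking Lemma~\ref{minorsof45fans} to contradict Lemma~\ref{trianglespersist}. No purely connectivity-theoretic argument of the kind you sketch can dispose of it.

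Second, your exclusion of the $4$\dash fan case in (D) is wrong. If $Y=(y_{1},y_{2},y_{3},y_{4})$ is a $4$\dash fan of $M/d$, then $e=y_{1}$ (as $e$ lies in no triad of $M$), and $\{y_{1},y_{2},y_{3}\}$ is a triangle of $M/d$ but \emph{not} of $M$: since $M\ba e$ has an $N$\dash minor, $e$ is in no triangle of $M$, so in fact $\{e,d,y_{2},y_{3}\}$ is a circuit of $M$. Consequently Lemma~\ref{minorsof45fans} only tells you that $M/d\ba e$ has an $N$\dash minor (the branch $M/d/y_{4}$ dies on the triad $\{y_{2},y_{3},y_{4}\}$), and this does not contradict Lemma~\ref{trianglespersist} at all. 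The paper spends roughly a page here proving that $M/d\ba e$ is in fact \ifc, which is what contradicts the counterexample status of $(M,N)$; your one-line dismissal skips this entirely. A further unsupported step: when $e\notin Y$ you claim Lemma~\ref{quadiso} together with (A) ``furnishes'' an \ifc\ minor of the right size --- possessing a quad does not produce such a minor, and the correct argument instead uses the cocircuit $\{b,c,d,e\}$ to push $d$ into the coclosure of $X$ and lift the violator to $M$. Your steps (A) and (B) are essentially sound (the paper reaches (B) more quickly via \cite[Lemma~8.8.6]{Oxl11}), but (C) and (D) as written do not constitute a proof.
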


\begin{proof}
From Proposition~\ref{fans3sep} and the fact that $M$
is \ifc\ with at least eleven elements, it follows that
$(a,b,c,d)$ is not a $4$\dash fan of $M$.
Therefore $\{b,c,d,e\}$ is a cocircuit in $M$.

\begin{sublemma}
\label{Mcond3con}
$M\ba e/ d$ and $M/d$ are $3$\dash connected.
\end{sublemma}

\begin{subproof}
We start by showing that $M\ba e /d$ is $3$\dash connected.
Since $M\ba e$ is $(4,4)$\dash connected and
$|E(M\ba e)|\geq 10$, it follows that
$\{a,b,c,d\}$ is not properly contained in a fan of
$M\ba e$.
As $b$ and $c$ are contained in a triangle and a triad in
$M\ba e$, it follows that deleting or contracting either
of these elements from $M\ba e$ produces a matroid
that is not $3$\dash connected.
Now \cite[Lemma~8.8.6]{Oxl11} implies that
either $M\ba e\ba d$ or $M\ba e/d$ is $3$\dash connected.
The former matroid contains a $2$\dash cocircuit, so
$M\ba e/d$ is $3$\dash connected.

If $M/d$ is not $3$\dash connected, then it follows easily
(see the dual of \cite[Lemma~2.6]{Oxl81})
that $\{e,d\}$ is contained in a triangle of $M$.
However, $N$ is a minor of $M\ba e$, so we have a contradiction
to Lemma~\ref{trianglespersist}.
\end{subproof}

\begin{sublemma}
\label{Mdelecond}
$M\ba e/d$, and hence $M/d$, has an $N$\dash minor.
\end{sublemma}

\begin{subproof}
Let $N_{0}$ be an $N$\dash minor of $M$.
Then $\{a,b,c\}\subseteq E(N_{0})$, by Lemma~\ref{trianglespersist}.
As $(a,b,c,d)$ is a $4$\dash fan of $M\ba e$,
it now follows by Lemma~\ref{minorsof45fans} that
$M\ba e /d$ has an $N$\dash minor.
\end{subproof}

\begin{sublemma}
\label{3sepsMcond}
Let $(X,Y)$ be a \ftv\ of $M/d$, and assume that
$|X\cap\{a,b,c\}|\geq 2$.
Then $|Y|= 4$, and $e\in Y$.
Morever, $Y\cap\{a,b,c\}=\emptyset$.
\end{sublemma}

\begin{subproof}
Assume that the result fails.

\begin{subsublemma}
\label{Ydisbig}
$|Y|\geq 5$.
\end{subsublemma}

\begin{subproof}
Assume otherwise.
Then $|Y|=4$.
Assume that $e\in X$.
If $\{b,c\}\subseteq X$, then
$d\in\cl_{M}^{*}(X)$, as $\{b,c,d,e\}$ is a cocircuit.
It follows from~\cite[Corollary~8.2.6(iii)]{Oxl11} that
$\lambda_{M}(X\cup d)=\lambda_{M/d}(X)$, and therefore
$(X\cup d,Y)$ is a \ftv\ of $M$, an impossibility.
Hence either $b$ or $c$ is contained in $Y$.

Proposition~\ref{quad4fan} implies that $Y$
is either a quad or a $4$\dash fan of $M/d$.
As $\{a,b,c\}$ is a triangle of $M/d$ that meets $Y$
in a single element, $Y$ is not a cocircuit, and hence not
a quad of $M/d$.
Thus $Y=\{y_{1},y_{2},y_{3},y_{4}\}$, where
$(y_{1},y_{2},y_{3},y_{4})$ is a $4$\dash fan of $M/d$.
Since the triangle $\{a,b,c\}$ cannot meet the triad
$\{y_{2},y_{3},y_{4}\}$ in a single element, it
follows that $y_{1}$ is equal to $b$ or $c$.

Let $N_{0}$ be an $N$\dash minor of $M/d$.
Since $\{y_{2},y_{3},y_{4}\}$ is a triad of $M$, it follows from
Lemma~\ref{trianglespersist} that
$\{y_{2},y_{3},y_{4}\}\subseteq E(N_{0})$.
But $\{a,b,c\}$ is a triangle of $M$, so
$\{a,b,c\}\subseteq E(N_{0})$.
Therefore $\{y_{1},y_{2},y_{3},y_{4}\}\subseteq E(N_{0})$,
and this contradicts Lemma~\ref{minorsof45fans}.
Therefore we conclude that $e\in Y$.

Since \ref{3sepsMcond} fails, yet $|Y|=4$ and $e\in Y$,
we deduce that $Y$ contains exactly one element
of the triangle $\{a,b,c\}$.
Thus $Y$ is not a quad of $M/d$, so $Y$ is a $4$\dash fan,
$(y_{1},y_{2},y_{3},y_{4})$, of $M/d$.
Since $\{a,b,c\}$ is a triangle of $M/d$, and $\{y_{2},y_{3},y_{4}\}$
is a triad, orthogonality requires that the single
element in $Y\cap\{a,b,c\}$ is $y_{1}$.
Therefore $e$ is contained in the triad $\{y_{2},y_{3},y_{4}\}$.
But this means that $M\ba e$ contains a $2$\dash cocircuit,
a contradiction as it is $3$\dash connected.
\end{subproof}

Let $T=\{a,b,c\}$.
As $Y$ contains at most one element of $T$, it follows
from \ref{Ydisbig} that $|Y-T|\geq 4$.
Furthermore, $X$ spans $T$.
The next fact follows from these observations and from
\ref{Mcond3con}.

\begin{subsublemma}
\label{XdcupT}
$(X\cup T,Y-T)$ is a $3$\dash separation in $M/d$.
\end{subsublemma}

\begin{subsublemma}
\label{einYd}
$e\in Y$.
\end{subsublemma}

\begin{subproof}
Assume that $e\in X$.
Then~\ref{XdcupT} and the cocircuit $\{b,c,d,e\}$ imply that
$(X\cup T\cup d,Y-T)$ is $3$\dash separation of $M$.
Since $|Y-T|\geq 4$, it follows that $M$ has
a \ftv, which is impossible.
\end{subproof}

\begin{subsublemma}
\label{YminusT}
$|Y-T|\leq 5$.
\end{subsublemma}

\begin{subproof}
By~\ref{XdcupT} and \ref{einYd}, we see that
$(X\cup T,Y-(T\cup e))$ is a $3$\dash separation in
$M/d\ba e$.
As $\{b,c,d\}$ is a triad in $M\ba e$, it follows that
$d\in\cl_{M\ba e}^{*}(T)$, so
\[(X\cup T\cup d,Y-(T\cup e))\]
is a $3$\dash separation of $M\ba e$.
Since $|X\cup T\cup d|>4$, and $M\ba e$ is $(4,4)$\dash connected,
it follows that $|Y-(T\cup e)|\leq 4$, so $|Y-T|\leq 5$.
\end{subproof}

\begin{subsublemma}
\label{Ydnot5fan}
$|Y-T|=4$.
\end{subsublemma}

\begin{subproof}
We have observed that $|Y-T|\geq 4$, so if \ref{Ydnot5fan}
is false, it follows from \ref{YminusT} that $|Y-T|=5$.
From~\ref{XdcupT} and the dual of Proposition~\ref{zhou2.15}, we see that
$Y-T$ is a $5$\dash fan of $M/d$.
Let $(y_{1},\ldots, y_{5})$ be a fan ordering of
$Y-T$.
Since $M\ba e$ is $3$\dash connected, $e$ is contained in no
triads of $M$, so $e=y_{1}$ or $e=y_{5}$.
By reversing the fan ordering as necessary, we can assume that
the first case holds.
As $\{y_{2},y_{3},y_{4}\}$ is a triad of $M$, it follows
that $\{y_{3},y_{4},y_{5}\}$ is not a triangle, or else
$M$ has a $4$\dash fan.
Therefore $\{y_{3},y_{4},y_{5},d\}$ is a circuit of $M$
that is contained in $(Y-T)\cup d$.
It meets the cocircuit $\{b,c,d,e\}$ in a single element,
violating orthogonality.
\end{subproof}

As $|Y|\geq 5$, and $|Y-T|=4$, it follows that
$|Y|=5$ and $|Y\cap T|=1$.
From Proposition~\ref{zhou2.15}, we see that
$Y$ is a $5$\dash fan of $M/d$.
Let $(y_{1},\ldots,y_{5})$ be a fan ordering of
$Y$ in $M/d$.
As $M\ba e$ is $3$\dash connected, $e$ is in no triad in
$M$, and hence in $M/d$, so $e=y_{1}$ or $e=y_{5}$.
By reversing the fan ordering as necessary, we assume
$e=y_{1}$.
Since $\{y_{2},y_{3},y_{4}\}$ is a triad of $M$, it follows that
$\{y_{3},y_{4},y_{5}\}$ is not a triangle, or else
$M$ has a $4$\dash fan.
Therefore $\{y_{3},y_{4},y_{5},d\}$ is a circuit of $M$.
This circuit cannot meet the cocircuit $\{b,c,d,e\}$ in
the single element $d$.
Therefore the single element in $T\cap Y$
is in $\{y_{3},y_{4},y_{5}\}$.
Call this element $y$.
As the triangle $T$ cannot meet the triad
$\{y_{2},y_{3},y_{4}\}$ in a single element, it
follows that $y=y_{5}$.
Since $(y_{5},y_{4},y_{3},y_{2})$ is a $4$\dash fan
of $M/d$, and $\{y_{2},y_{3},y_{4}\}$ is a triad of
$M$, it follows from Lemma~\ref{trianglespersist} and
Lemma~\ref{minorsof45fans} that $M/d\ba y_{5}$,
and hence $M\ba y_{5}$ has an $N$\dash minor.
This contradicts the fact that $y_{5}$ is in the
triangle $T$.
Thus we have completed the proof of \ref{3sepsMcond}.
\end{subproof}

From \ref{3sepsMcond} we know that $M/d$ is $(4,4)$\dash connected.
Next we must eliminate the possibility that $M/d$ has a
$4$\dash fan.

\begin{sublemma}
\label{Ydisa4fan}
Let $(X,Y)$ be a \ftv\ of $M/d$, where
$|X\cap\{a,b,c\}|\geq 2$.
Then $Y$ is not a $4$\dash fan of $M/d$.
\end{sublemma}

\begin{subproof}
Assume that $Y$ is a $4$\dash fan,
$(y_{1},y_{2},y_{3},y_{4})$.
Thus $\{y_{2},y_{3},y_{4}\}$ is a triad
in $M/d$, and hence in $M$.
It follows from \ref{3sepsMcond} that $e\in Y$.
But $e$ is not in a triad of $M$, so $e=y_{1}$.
Since $M$ has no $4$\dash fan, it follows that
$\{e,y_{2},y_{3}\}$ is not a triangle of $M$, so
$\{e,d,y_{2},y_{3}\}$ is a circuit.

From \ref{Mcond3con}, we see that $M/d\backslash e$ is
$3$\dash connected.
We shall show that it is \ifc.
Once we prove this assertion, we will have shown that
$(M,N)$ is not a counterexample to Theorem~\ref{main}, since
$M/d\ba e$ has an $N$\dash minor by \ref{Mdelecond}.
This contradiction will complete the proof
of \ref{Ydisa4fan}.

\begin{subsublemma}
\label{bcsplitup}
If $(U,V)$ is a \ftv\ of $M/d\ba e$, then
$\{b,c\}\nsubseteq U$ and $\{b,c\}\nsubseteq V$.
\end{subsublemma}

\begin{subproof}
Assume that $(U,V)$ is a \ftv\ of $M/d\ba e$
such that $b,c\in U$.
Then $d\in\cl_{M\ba e}^{*}(U)$, because of the
triad $\{b,c,d\}$, so $(U\cup d,V)$ is a
\ftv\ in $M\ba e$.
As $|U\cup d|>4$, and $M\ba e$ is $(4,4)$\dash connected,
we deduce that $|V|=4$.
Assume that $V$ is a quad of $M\ba e$.
Then $V\cup e$ is a cocircuit of $M$, which cannot meet the
circuit $\{e,d,y_{2},y_{3}\}$ in a single
element.
Hence $y_{2}$ or $y_{3}$ is in $V$.
However, $V$ is a circuit in $M\ba e$, and $\{y_{2},y_{3},y_{4}\}$ is a
cocircuit in $M\ba e$, as it is a triad of $M$, and $M\ba e$
is $3$\dash connected.
Orthogonality requires that
$|V\cap \{y_{2},y_{3},y_{4}\}|=2$.
This means that $\{y_{2},y_{3},y_{4}\}\subseteq \cl_{M\ba e}^{*}(V)$,
so $V\cup\{y_{2},y_{3},y_{4}\}$ is a $5$\dash element
$3$\dash separating set in $M\ba e$.
As $M\ba e$ is $(4,4)$\dash connected, it follows that $M\ba e$
has at most nine elements, contradicting our earlier
assumption that $|E(M)|\geq 11$.
Thus $V$ is not a quad of $M\backslash e$, and
Proposition~\ref{quad4fan} implies that $V$ is a $4$\dash fan in $M\ba e$.

Let $T^{*}$ be the triad of $M\ba e$ that is contained in $V$.
As $M$ has no $4$\dash fans, $T^{*}\cup e$ is a cocircuit of
$M$.
It cannot meet the circuit $\{e,d,y_{2},y_{3}\}$ in the single
element $e$.
Let $y$ be an element in $\{y_{2},y_{3}\}\cap T^{*}$.
As $\{y_{2},y_{3},y_{4}\}$ is a triad in $M/d$, and hence
in $M$, it does not contain any element that is in a triangle
of $M$, or else $M$ has a $4$\dash fan.
Therefore $y$ is not in the triangle of $M\ba e$ that is contained in
$V$, so $V-y$ is a triangle of $M$.
Thus $V-y$ is contained in the ground set of every $N$\dash minor of
$M$, so Lemma~\ref{minorsof45fans} implies that
$M\ba e/y$, and hence $M/y$ has an $N$\dash minor.
However, since $y$ is contained in the triad $\{y_{2},y_{3},y_{4}\}$
of $M$, this contradicts Lemma~\ref{trianglespersist}.
\end{subproof}

Let $(U,V)$ be a \ftv\ of $M/d\ba e$, and assume that $a\in U$.
By~\ref{bcsplitup}, we may assume that
$x \in U$ and $y \in V$, where $\{x,y\} = \{b,c\}$.
Then $y\in \cl_{M/d\ba e}(U)$, as $\{a,x,y\}$
is a triangle, so $(U\cup y,V-y)$ is a $3$\dash separation
of $M/d\ba e$.
It follows from~\ref{bcsplitup} that
it is not a \ftv, so $|V|=4$.
Since $V$ contains an element that is in $\cl_{M/d\ba e}(U)$,
it cannot be a quad of $M/d\ba e$, so it is a $4$\dash fan.
Moreover, as $y \in \cl_{M/d\ba e}(U)$, it follows that
$y$ is not in the triad of $M/d\ba e$ that
is contained in $V$.
Therefore $V-y$ is a triad of $M/d\ba e$.
If $V-y$ is a triad of $M$, then $V-y$ is
contained in every $N$\dash minor of $M$.
Because $\{a,x,y\}=\{a,b,c\}$ is a triangle, it follows that
$y$ is contained in every $N$\dash minor.
Thus $V$ is in every $N$\dash minor of $M$.
This implies that $N$ has a $4$\dash element $3$\dash separating
set, which is impossible as $|E(N)|\geq 8$.
Therefore $V-y$ is not a triad of $M$, so
$(V-y)\cup e$ is a cocircuit.
It cannot meet the circuit $\{e,d,y_{2},y_{3}\}$ in the single
element $e$, so either $y_{2}$ or $y_{3}$ is in the
triad $V-y$ of $M\ba e$.

Note that $V-y$ is not equal to $\{y_{2},y_{3},y_{4}\}$,
as one set is a triad of $M$ and the other is not.
They are both triads of $M\ba e$, and they have at least
one element in common.
Hence they have exactly one element
in common, as $M\ba e$ is $3$\dash connected.
Let $z$ be the unique element in $(V-y)\cap\{y_{2},y_{3}\}$.
If $T'$ is the triangle of $M/d\ba e$ that is contained
in $V$, then $z$ is not in $T'$, as otherwise the triad
$\{y_{2},y_{3},y_{4}\}$ in $M/d$ meets the triangle $T'$
in a single element, $z$.
Therefore $V-z$ is a triangle in $M/d\ba e$, and
$V-y$ is a triad.

Note that $y$ is in every $N$\dash minor of $M$, because of the
triangle $\{a,x,y\}=\{a,b,c\}$, and $z$ is in every $N$\dash minor
of $M$ because of the triad $\{y_{2},y_{3},y_{4}\}$.
But $V$ cannot be contained in an $N$\dash minor
of $M/d\ba e$, as it is a $3$\dash separating set.
Therefore there is some element $w\in V-\{y,z\}$ such that
$N$ is a minor of $M/d\ba e\ba w$ or
$M/d\ba e/w$.
But $z$ is in the $2$\dash cocircuit $V-\{y,w\}$ of the first
matroid, and $y$ is in the $2$\dash circuit $V-\{z,w\}$
of the second.
This leads to a contradiction, as $y$ and $z$ are in the
ground set of every $N$\dash minor of $M$.

We conclude that there can be no \ftv\ in $M/d\ba e$, and therefore
$M/d\ba e$ is \ifc\ and has an $N$\dash minor.
This contradicts our assumption that $M$ is a counterexample
to Theorem~\ref{main}.
Thus \ref{Ydisa4fan} holds.
\end{subproof}

Now we can complete the proof of Lemma~\ref{fanlemma}.
If $(a,b,c,d)$ is a $4$\dash fan of $M\ba e$, where this
matroid is $(4,4)$\dash connected with an $N$\dash minor, then
\ref{Mdelecond} implies that $M/d$ has an $N$\dash minor.
It follows from \ref{Mcond3con} that $M\ba e/d$ and $M/d$
are $3$\dash connected, and \ref{3sepsMcond} implies that
$M/d$ is $(4,4)$\dash connected.
Moreover, if $(X,Y)$ is a \ftv\ of $M/d$ where
$X$ contains at least two elements of $\{a,b,c\}$, then
\ref{3sepsMcond} also implies that $|Y|=4$ and
$Y\cap\{a,b,c,e\}=\{e\}$.
As \ref{Ydisa4fan} implies that $Y$ cannot be a $4$\dash fan in
$M/d$, Proposition~\ref{quad4fan} implies that $Y$ is a quad.
Thus Lemma~\ref{fanlemma} is proved.
\end{proof}

\begin{lemma}
\label{quad44con}
Let $(M,N)$ be either $(\bar{M},\bar{N})$ or $(\bar{M}^{*},\bar{N}^{*})$.
Assume that the element $e$ is such that $M\ba e$
is $(4,4)$\dash connected with an $N$\dash minor, and that
$Q$ is a quad of $M\ba e$.
If $x\in Q$ and $M\ba e\ba x$ has an $N$\dash minor,
then $M\ba x$ is $(4,4)$\dash connected.
In particular, if $(X,Y)$ is a \ftv\ of $M\ba x$ such that
$|X\cap(Q-x)|\geq 2$, then $Y$ is a quad of $M\ba x$ such that
$|Y\cap Q|=1$, and $e\in Y$.
\end{lemma}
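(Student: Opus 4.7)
The plan is to follow the template of Lemma~\ref{fanlemma}: first establish preliminary structural facts, then analyze a hypothetical \ftv\ $(X,Y)$ of $M\ba x$ by case analysis. Since $M\ba e\ba x$ has an $N$\dash minor, Lemma~\ref{trianglespersist} implies that $x$ lies in no triangle or triad of $M$, and a standard $3$\dash connectedness argument (analogous to the one used in sublemma~\ref{Mcond3con}) gives that $M\ba x$ is $3$\dash connected. Next, $Q$ is a $4$\dash cocircuit of $M\ba e$, so either $Q$ or $Q\cup e$ is a cocircuit of $M$; the first would make $Q$ a quad of $M$, giving a $3$\dash separation with $|E(M)-Q|\geq 7$ and contradicting the internal $4$\dash connectivity of $M$. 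Thus $Q\cup e$ is a $5$\dash cocircuit of $M$, and since $M\ba x$ is $3$\dash connected, $(Q-x)\cup e$ is a $4$\dash cocircuit of $M\ba x$.

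Now let $(X,Y)$ be a \ftv\ of $M\ba x$ with $|X\cap(Q-x)|\geq 2$. If $|X\cap(Q-x)|=3$, then $x\in\cl_M(X)$ via the circuit $Q$, so $(X\cup x,Y)$ is a $3$\dash separation of $M$ with $|Y|\geq 4$, contradicting internal $4$\dash connectivity. So $|X\cap(Q-x)|=2$ and there is a unique element $y\in Y\cap(Q-x)$. I rule out $e\in X$ as follows: the cocircuit $(Q-x)\cup e$ splits as $(3,1)$ across $(X,Y)$ in this case, so $y\in\cl^*_{M\ba x}(X)$; lifting $x\in\cl_M(X\cup y)$ via $Q$ produces a $3$\dash separation of $M$ whose $Y$\dash side has at most $3$ elements, forcing $|Y|=4$. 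Proposition~\ref{quad4fan} then makes $Y$ a quad or a $4$\dash fan. A $4$\dash fan $(s_1,s_2,s_3,s_4)$ is eliminated using Lemma~\ref{minorsof45fans} together with Lemma~\ref{trianglespersist} and the main theorem's triangle/triad hypotheses, which force $\{s_1,s_2,s_3\}$ to be a triangle of $M$ and $\{s_2,s_3,s_4,x\}$ to be a $4$\dash cocircuit of $M$, followed by binary orthogonality among these sets and $Q$, with a short sub-case analysis on the position of $y$ in the fan. When $Y$ is a quad, it is a $4$\dash circuit of $M\ba e$ meeting the $4$\dash cocircuit $Q$ of $M\ba e$ in the single element $y$, again violating binary orthogonality. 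Hence $e\in Y$.

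The main obstacle is proving $|Y|=4$ once $e\in Y$, because $(Q-x)\cup e$ now splits $(2,2)$ across $(X,Y)$ and no longer puts $y$ into the coclosure of $X$. My approach is to pass to $M\ba e\ba x$, which is $3$\dash connected by a standard argument together with the fact that $x$ lies in no triad of $M$. A connectivity computation shows $(X,Y-e)$ is a $3$\dash separation of $M\ba e\ba x$; the triad $Q-x$ of $M\ba e\ba x$ gives $y\in\cl^*_{M\ba e\ba x}(X)$, and adding $x\in\cl_M(X\cup y)$ through the circuit $Q$ produces a $3$\dash separation $(X\cup y\cup x, Y-e-y)$ of $M\ba e$. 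Since $M\ba e$ is $(4,4)$\dash connected and $|X\cup y\cup x|\geq 6$, we obtain $|Y-e-y|\leq 4$, so $|Y|\in\{4,5,6\}$. The case $|Y|=5$ is killed by Proposition~\ref{zhou2.15}: $Y$ would be a $5$\dash cofan of $M\ba x$, and Lemma~\ref{minorsof45fans} combined with the triangle/triad hypothesis of the main theorem gives a contradiction. The case $|Y|=6$ is handled by applying Proposition~\ref{quad4fan} to $Y-e-y$ inside $M\ba e$ and ruling out both the quad and $4$\dash fan possibilities by the same orthogonality technique used earlier. Once $|Y|=4$ is established, Proposition~\ref{quad4fan} and the $4$\dash fan orthogonality obstruction force $Y$ to be a quad, and $|Y\cap Q|=|\{y\}|=1$ is immediate from $Y\cap(Q-x)=\{y\}$.
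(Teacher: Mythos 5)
Your overall architecture matches the paper's: show $M\ba x\ba e$ and $M\ba x$ are $3$\dash connected, note that $Q\cup e$ is a cocircuit of $M$, reduce to the case $|Y\cap(Q-x)|=1$, bound $|Y|\leq 6$ by pushing the separation into $M\ba e$ via the coclosure of $y$ and the closure of $x$, and then eliminate $|Y|\in\{5,6\}$. But in two places the tools you name will not close the case. First, in your treatment of $e\in X$ you correctly force $|Y|=4$, but your elimination of the $4$\dash fan possibility fails. Writing $Y=(s_1,s_2,s_3,s_4)$, orthogonality with $Q\cup e$ forces $y=s_4$, and Lemma~\ref{minorsof45fans} plus the triangle hypothesis only yields that $M\ba x/y$ has an $N$\dash minor; since $y$ need not lie in any triad of $M$ (it lies only in the $4$\dash cocircuits $Q\cup e$ and $\{x,s_2,s_3,s_4\}$), neither Lemma~\ref{trianglespersist} nor orthogonality among $Q$, $Q\cup e$, $\{s_1,s_2,s_3\}$, and $\{x,s_2,s_3,s_4\}$ gives a contradiction --- every relevant circuit--cocircuit intersection here is already even. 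The paper disposes of this case only at the very end, after $M\ba x$ is known to be $(4,4)$\dash connected, by applying Lemma~\ref{fanlemma} to $M\ba x$ to produce a quad of $M/y$ containing $x$ and colliding it with the triangle $Q-y$. That appeal to Lemma~\ref{fanlemma} is not optional, and your ordering (handling $e\in X$ before $(4,4)$\dash connectivity of $M\ba x$ is established) makes it unavailable.

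Second, dismissing $|Y|=6$ as ``the same orthogonality technique used earlier'' is not an argument. Identifying $Y-\{e,y\}$ as a quad or a $4$\dash fan of $M\ba e$ is the right first step, but ruling out each possibility is the bulk of the paper's proof of this lemma: it requires the rank bound $r_{M\ba x}(Y)\leq 4$, a decomposition of a cocircuit of $M\ba x\ba e$ into two triads meeting in $y$ obtained from symmetric differences, locating a circuit $C$ with $e\in C\subseteq Y$ of a specific form, the dual of Lemma~\ref{minorsof45fans} applied to a $5$\dash cofan, and Lemma~\ref{quadiso} applied to a quad of $M/y$ --- none of which is plain orthogonality. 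As written, both of these steps are genuine gaps; the rest of the proposal tracks the paper's proof faithfully.
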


\begin{proof}
As $M$ has no quads, we deduce that $Q\cup e$ is a cocircuit in
$M$.

\begin{sublemma}
\label{Mdelx3con}
$M\ba x\ba e$ and $M\ba x$ are $3$\dash connected.
\end{sublemma}

\begin{subproof}
Let $(U,V)$ be a
$2$\dash separation in $M\ba x\ba e$.
By relabeling as necessary, we assume that $|U\cap (Q-x)|\geq 2$.
If $U$ contains $Q-x$, then $(U\cup x,V)$ is a $2$\dash separation
of $M\ba e$.
This is impossible, so $V$ contains a single element of
$Q-x$.
Then
\[\lambda_{M\ba x\ba e}(V-(Q-x))\leq
\lambda_{M\ba x\ba e}(V)\leq 1,\]
as $Q-x$ is a triad of $M\ba x\ba e$.
Now $x\in\cl_{M\ba e}(U\cup(Q-x))$, so
\[\lambda_{M\ba e}(V-(Q-x))=
\lambda_{M\ba x\ba e}(V-(Q-x))\leq 1.\]
But $M\ba e$ is $3$\dash connected, so this means that
$|V-(Q-x)|\leq 1$.
Thus $|V|=2$, and $V$ must be a $2$\dash cocircuit of $M\ba x\ba e$.
This means that $x$ is in a triad of $M\ba e$.
This triad must meet $Q$ in two elements, by orthogonality.
Thus $|\cl_{M\ba e}^{*}(Q)|\geq 5$, and $M\ba e$
contains a $5$\dash element $3$\dash separating set.
This is a contradiction as $M\ba e$ is $(4,4)$\dash connected
with at least ten elements.
Thus $M\ba x \ba e$ is $3$\dash connected, and it follows easily
that $M\ba x$ is $3$\dash connected.
\end{subproof}

Let $(X,Y)$ be a \ftv\ of $M\ba x$, and assume that
$X$ contains at least two elements of $Q-x$.
If $Q-x\subseteq X$, then $x\in\cl_{M}(X)$, as $Q$ is a circuit of $M$.
This implies that $(X\cup x,Y)$ is a \ftv\ of $M$, which
is impossible.
Therefore $Y$ contains exactly one element of $Q-x$.
Let us call this element $y$.

\begin{sublemma}
\label{New}
$(X-e, Y-e)$ is a $3$\dash separation of
$M\ba x \ba e$ and $y \in \cl_{M\ba x \ba e}^{*}(Y - \{e,y\})$.
 \end{sublemma}
 
\begin{subproof}
The fact that $(X- e, Y-e)$ is a $3$\dash separation of
$M\ba x\ba e$ follows because $|X|,|Y|\geq 4$, and
$M\ba x\ba e$ is $3$\dash connected.
Since $Q - x$ is a triad of
$M\ba x \ba e$, and $Q-\{x,y\}\subseteq X - e$, we deduce that
$y \in \cl_{M\ba x\ba e}^{*}(X-e)$.
This means that $y \in \cl_{M\ba x \ba e}^{*}(Y - \{e,y\})$,
as otherwise $((X-e)\cup y, Y-\{e,y\})$ is a
$2$\dash separation of $M\ba x\ba e$.
\end{subproof}

\begin{sublemma}
\label{lambdaYminusey}
$\lambda_{M\ba e}(Y-\{e,y\})\leq 2$.
\end{sublemma}

\begin{subproof}
Since $(X-e,Y-e)$ is a $3$\dash separation of $M\ba x\ba e$
and $y\in \cl_{M\ba x\ba e}^{*}(X-e)$, it follows that
\[
\lambda_{M\ba x\ba e}(Y-\{e,y\})\leq
\lambda_{M\ba x\ba e}(Y-e)=2.
\]
Since $(X-e)\cup \{x,y\}$ contains $Q$, it follows that
$x\in\cl_{M\ba e}((X-e)\cup y)$.
This means that
\[
\lambda_{M\ba e}(Y-\{e,y\})=
\lambda_{M\ba x\ba e}(Y-\{e,y\})\leq 2,
\]
as desired.
\end{subproof}

\begin{sublemma}
\label{Yatmost6}
$|Y|\leq 6$.
\end{sublemma}

\begin{subproof}
As $M\ba e$ is $(4,4)$-connected, $|Y - \{e,y\}| \le 4$ by
\ref{lambdaYminusey}.
Thus $|Y| \le 6$.
\end{subproof}

\begin{sublemma}
\label{Ynot6}
$|Y|\ne 6$.
\end{sublemma}

\begin{subproof}
Assume that $|Y|=6$.
If $e\notin Y$, then \ref{lambdaYminusey} implies that
$((X-e)\cup \{x,y\},Y-y)$ is a $3$\dash separation of $M\ba e$.
As $|Y-y|=5$ and $|(X-e)\cup\{x,y\}|=|X|+1\geq 5$, this contradicts the
fact that $M\ba e$ is $(4,4)$\dash connected.
Therefore $e\in Y$, and $Y-\{e,y\}$ is a $4$\dash element
$3$\dash separating set in $M\ba e$.
Thus $Y-\{e,y\}$ is either a quad or a $4$\dash fan of $M\ba e$.
The next two assertions show that both these cases are
impossible, thereby finishing the proof of \ref{Ynot6}.

\begin{subsublemma}
\label{Yminuseynotquad}
$Y-\{e,y\}$ is not a quad of $M\ba e$.
\end{subsublemma}

\begin{subproof}
Assume that $Y-\{e,y\}$ is a quad of $M\ba e$.
Thus it is a circuit of $M$, and $Y-y$ is a cocircuit of $M$.
If $Y-y$ is not a cocircuit of $M\ba x$, then
$x$ is in the coclosure of $Y-y$ in $M$.
This leads to a contradiction to orthogonality with the
circuit $Q$ of $M$.
Thus $Y-y$ is a cocircuit of $M\ba x$, so
$Y-\{e,y\}$ is a quad in both $M\ba e$ and $M\ba x\ba e$.
As $Y-y$ is a cocircuit of $M\ba x$ and $|Y|=6$, we see that
$r_{M\ba x}^{*}(Y)\geq 4$, so
\[
r_{M\ba x}(Y)\leq
\lambda_{M\ba x}(Y)-r_{M\ba x}^{*}(Y)+|Y|
\leq 2-4+6=4.
\]

By \ref{New}, there is a cocircuit of $M\ba x\ba e$
contained in $Y-e$ that contains $y$.
The symmetric difference of this cocircuit with
$Y-\{e,y\}$ is a disjoint union of cocircuits.
As $M\ba x\ba e$ contains no cocircuit with fewer than three
elements, it follows that there are two triads,
$T_{1}^{*}$ and $T_{2}^{*}$, of $M\ba x\ba e$, such that
$T_{1}^{*}\cap T_{2}^{*}=\{y\}$, and
$T_{1}^{*}\cup T_{2}^{*}=Y-e$.
If both $T_{1}^{*}\cup e$ and $T_{2}^{*}\cup e$ are cocircuits
of $M\ba x$, then we can take the symmetric
difference of these cocircuits, and deduce that $Y-\{e,y\}$
is a cocircuit of $M\ba x$ that is properly contained in the
cocircuit $Y-y$.
Since this is impossible, we deduce that we can relabel as
necessary, and assume that $T_{1}^{*}$ is a triad of $M\ba x$.

Let $z$ be an arbitrary element of $T_{2}^{*}-y$.
Then $Y-\{e,y,z\}$ is independent in $M\ba x$.
Orthogonality with the cocircuit $(Q-x)\cup e$ means that
$y$ cannot be in the closure of $Y-\{e,y,z\}$ in $M\ba x$.
Thus $Y-\{e,z\}$ is independent.
Since $r_{M\ba x}(Y)\leq 4$, it follows that
$Y-\{e,z\}$ spans $Y$ in $M\ba x$.
Let $C$ be a circuit of $M\ba x$ such that
$\{e\}\subseteq C\subseteq Y-z$.
If $y\notin C$, then $C$ and the cocircuit $(Q-x)\cup e$ meet
in $\{e\}$.
Therefore $y\in C$.
This implies that $C$ contains exactly one element of $T_{1}^{*}-y$.
Now $C$ cannot be a triangle, as $M\ba e$ has an $N$\dash minor.
Therefore $C$ also contains the single element in $T_{2}^{*}-\{y,z\}$.
But now the circuit $C$ meets the cocircuit $Y-y$ of $M\ba x$ in
three elements: $e$, and a single element from each of $T_{1}^{*}-y$
and $T_{2}^{*}-y$.
This contradiction proves~\ref{Yminuseynotquad}.
\end{subproof}

\begin{subsublemma}
\label{Yminuseynot4fan}
$Y-\{e,y\}$ is not a $4$\dash fan of $M\ba e$.
\end{subsublemma}

\begin{subproof}
Assume that $Y-\{e,y\}=(y_{1},y_{2},y_{3},y_{4})$ is a
$4$\dash fan in $M\ba e$.
Then $\{y_{2},y_{3},y_{4},e\}$ is a cocircuit of $M$.

As $M\ba x\ba e$ has no cocircuits with fewer than three
elements, $\{y_{2},y_{3},y_{4}\}$ is a triad of $M\ba x\ba e$.
Thus $(y_{1},y_{2},y_{3},y_{4})$ is a $4$\dash fan of
$M\ba x\ba e$.
By~\ref{New}, there is a cocircuit $C^*$ of $M\ba x\ba e$
such that $\{y\}\subseteq C^{*}\subseteq Y-e$.
This cocircuit must meet the triangle
$\{y_{1},y_{2},y_{3}\}$ in exactly two elements.
If $\{y_{2},y_{3}\}\subseteq C^{*}$, then
the symmetric difference of $\{y_{2},y_{3},y_{4}\}$ and $C^{*}$
is $\{y,y_{4}\}$, as $\{y_{2},y_{3},y_{4}\}$ is not properly
contained in $C^{*}$.
Since $M\ba x\ba e$ has no $2$\dash cocircuit, 
we deduce that $y_{1}\in C^{*}$.
Either $C^{*}$, or its symmetric difference with
$\{y_{2},y_{3},y_{4}\}$, is a triad of $M\ba x\ba e$ that
contains $y$, $y_{1}$, and a single element from $\{y_{2},y_{3}\}$.
We can swap the labels on $y_{2}$ and $y_{3}$ if necessary,
so we can assume that $\{y,y_{1},y_{2}\}$ is a triad.
Thus $(y,y_{1},y_{2},y_{3},y_{4})$ is a $5$\dash cofan of $M\ba x\ba e$.
The dual of Lemma~\ref{minorsof45fans} implies that
$M\ba x\ba e/y$ and $M\ba x\ba e/y_{4}$ have $N$\dash minors.

Recall that $\{y_{2},y_{3},y_{4},e\}$ is a
cocircuit of $M$.
It is also a cocircuit of $M\ba x$, as otherwise
$x\in \cl_{M}^{*}(\{y_{2},y_{3},y_{4},e\})$, and this
contradicts orthogonality with the circuit $Q$.
Therefore $\{y_{2},y_{3},y_{4}\}$ is coindependent in
$M\ba x$.

Assume that $y_{1}$ is in $\cl_{M\ba x}^{*}(\{y_{2},y_{3},y_{4}\})$.
Let $C^{*}$ be a cocircuit of $M\ba x$ such that
$\{y_{1}\}\subseteq C^{*}\subseteq \{y_{1},y_{2},y_{3},y_{4}\}$.
As $\{y_{1},y_{2},y_{3}\}$ is a circuit of $M\ba x$, orthogonality
requires that $C^{*}$ contains exactly one element from
$\{y_{2},y_{3}\}$.
Thus
$C^{*}$ is a triad of $M\ba x$.
It is also a triad of $M$, or else $C^{*}\cup x$ is a cocircuit
of $M$ that violates orthogonality with $Q$.
Thus $M$ has a triad that contains two elements from the triangle
$\{y_{1},y_{2},y_{3}\}$, so $M$ has a $4$\dash fan.
This contradiction implies that
$y_{1}\notin\cl_{M\ba x}^{*}(\{y_{2},y_{3},y_{4}\})$.
Thus $\{y_{1},y_{2},y_{3},y_{4}\}$ is a coindependent set
in $M\ba x$, so $r_{M\ba x}^{*}(Y)\geq 4$.
Now we see that
\[
r_{M\ba x}(Y)=
\lambda_{M\ba x}(Y)-r_{M\ba x}^{*}(Y)+|Y|
\leq 2-4+6=4.
\]

If $\{y,y_{1},y_{3},y_{4}\}$ is dependent in $M\ba x\ba e$,
then it is a circuit, by orthogonality with the
triads $\{y,y_{1},y_{2}\}$ and $\{y_{2},y_{3},y_{4}\}$, and the
fact that $M\ba x\ba e$ has no $2$\dash circuits.
In this case, $\{y,y_{1},y_{3},y_{4}\}$ is a circuit of
$M$, and $Q\cup e$ is a cocircuit that meets it in the single
element $y$.
Therefore $\{y,y_{1},y_{3},y_{4}\}$ is independent in $M\ba x\ba e$,
and hence in $M\ba x$.
Therefore
$\{y,y_{1},y_{3},y_{4}\}$ spans $Y$ in $M\ba x$.
Let $C$ be a circuit of $M\ba x$ such that
$\{e\}\subseteq C\subseteq \{e,y,y_{1},y_{3},y_{4}\}$.

First observe that $y \in C$, as otherwise $C$ and $Q \cup e$
are a circuit and a cocircuit of $M$ that meet in $\{e\}$.
We have noted that $\{e,y_{2},y_{3},y_{4}\}$ is a cocircuit
of $M\ba x$.
Therefore orthogonality implies that
$C$ contains exactly one element of $\{y_{3},y_{4}\}$.
Since $M\ba e$ has an $N$\dash minor, it follows that
$e$ is in no triangles of $M$.
Therefore $y_{1}$ must be in $C$.
Hence $C$ is either $\{e,y,y_{1},y_{3}\}$ or
$\{e,y,y_{1},y_{4}\}$.
In the first case, we take the symmetric difference of
$C$ with the triangle $\{y_{1},y_{2},y_{3}\}$, and discover that
$e$ is in the triangle $\{e,y,y_{2}\}$, a contradiction.
Therefore $C=\{e,y,y_{1},y_{4}\}$.

We noted earlier that $M\ba x\ba e/y$, and hence
$M/y$, has an $N$\dash minor.
The symmetric difference of $C$ with the triangle
$\{y_{1},y_{2},y_{3}\}$ is $\{e,y,y_{2},y_{3},y_{4}\}$, which
must therefore be a circuit of $M$.
Thus $\{e,y_{2},y_{3},y_{4}\}$ is a circuit of $M/y$.
It is also a cocircuit, as it is a cocircuit in $M$.
Thus $M/y$ contains a quad that contains $e$.
Since $M/y\ba e$ has an $N$\dash minor, it follows
from Lemma~\ref{quadiso} that if $z$ is an arbitrary
member of the quad $\{e,y_{2},y_{3},y_{4}\}$, then
$M/y\ba z$ has an $N$\dash minor.
In particular, $M/y\ba y_{2}$, and hence $M\ba y_{2}$
has an $N$\dash minor.
This is contradictory, as $y_{2}$ is contained in the
triangle $\{y_{1},y_{2},y_{3}\}$ of $M$.
This completes the proof of~\ref{Yminuseynot4fan}.
\end{subproof}

The proof of \ref{Ynot6} now follows immediately from
\ref{Yminuseynotquad} and \ref{Yminuseynot4fan}.
\end{subproof}

\begin{sublemma}
\label{Ynot5}
$|Y|\ne 5$.
\end{sublemma}

\begin{subproof}
Assume that $|Y|=5$.
If $e\in X$, then $y\in \cl_{M\ba x}^{*}(X)$, since
$(Q-x)\cup e$ is a cocircuit of $M\ba x$ that is contained in
$X\cup y$.
This means that $(X\cup y, Y-y)$ is a $3$\dash separation of
$M\ba x$.
As $Q$ is a circuit of $M$, and $Q-x\subseteq X\cup y$, it
follows that $x\in \cl_{M}(X\cup y)$.
Therefore $(X\cup\{x,y\},Y-y)$ is a $3$\dash separation of
$M$, and as $|X\cup\{x,y\}|,|Y-y|\geq 4$, we have
violated the internal $4$\dash connectivity of $M$.
Therefore $e$ is in $Y$.

Proposition~\ref{zhou2.15} implies that $Y$ is a
$5$\dash cofan of $M\ba x$.
Let $(y_{1},y_{2},y_{3},y_{4},y_{5})$ be a fan ordering of
$Y$ in $M\ba x$.
Since $e$ is contained in no triangle of $M$ by
Lemma~\ref{trianglespersist}, we can assume that $e=y_{1}$.
The element $y$ cannot be contained in
$\{y_{2},y_{3},y_{4}\}$, or else this triangle meets the
cocircuit $Q\cup e$ of $M$ in the single element $y$.
Now $\{y_{1},y_{2},y_{3}\}$ is a triad of $M$, or
else $\{x,y_{1},y_{2},y_{3}\}$ is a cocircuit, and it
meets the circuit $Q$ in the single element $x$.
However, this is a contradiction, as $e=y_{1}$ and $M\ba e$
is $3$\dash connected.
\end{subproof}

We can now complete the proof of Lemma~\ref{quad44con}.
Recall that $(X,Y)$ is a \ftv\ of $M\ba x$, where
$x$ is contained in the quad $Q$ of $M\ba e$, and
$|X\cap (Q-x)|\geq 2$.
By combining \ref{Yatmost6}, \ref{Ynot6}, and
\ref{Ynot5}, we deduce that $|Y|=4$.
Therefore $M\ba x$ is $(4,4)$\dash connected with
an $N$\dash minor, and $Y$ is either a quad or a
$4$\dash fan of $M\ba x$.

Assume that $e$ is not in $Y$.
If $Y$ is a quad of $M\ba x$, then it is a circuit of $M$ that
meets the cocircuit $Q\cup e$ in the single element $y$.
Therefore $Y$ must be a $4$\dash fan of $M\ba x$.
Certainly $y$ is not contained in the triangle of $Y$, by
orthogonality with $Q\cup e$.
Therefore $Y=(y_{1},y_{2},y_{3},y)$ is a $4$\dash fan.
We can apply Lemma~\ref{fanlemma} to $M\ba x$, and deduce that
$M/y$ is $(4,4)$\dash connected with an $N$\dash minor.
Since $M/y$ is not \ifc, Lemma~\ref{fanlemma} also
implies that $M/y$ contains a quad and that this quad
contains $x$.
However, $Q-y$ is a triangle in $M/y$, so $M/y$ contains
a quad, and a triangle that contains an element of this quad.
It follows that the triangle and the quad meet in two
elements, and their union is a $5$\dash element
$3$\dash separating set of $M/y$.
As $M/y$ is $(4,4)$\dash connected, this leads to a contradiction,
so now we know that $e$ is in $Y$.

If $Y$ is a $4$\dash fan of $M\ba x$, then $e$ is not
contained in the triangle of this fan, as $M\ba e$ has an
$N$\dash minor.
Therefore $y$ is contained in a triangle of $M\ba x$ that
is contained in $Y-e$.
This triangle violates orthogonality with the cocircuit
$Q\cup e$ in $M$.
Hence $Y$ is a quad of $M\ba x$, and Lemma~\ref{quad44con} holds.
\end{proof}

The next proof is essentially the same as an argument used in
\cite{CMO12}.

\begin{lemma}
\label{nodeletequads}
Let $(M,N)$ be either $(\bar{M},\bar{N})$ or $(\bar{M}^{*},\bar{N}^{*})$.
Assume that the element $e$ is such that $M\ba e$
is $(4,4)$\dash connected with an $N$\dash minor, and that
$Q$ is a quad of $M\ba e$.
Then $N$ is not a minor of $M\ba e\ba x$, for any
element $x\in Q$.
\end{lemma}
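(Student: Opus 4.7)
The plan is to suppose for contradiction that $N$ is a minor of $M\ba e\ba x$ for some $x\in Q$, and apply Lemma~\ref{quad44con} repeatedly to extract a sequence of nested quads whose associated circuits and cocircuits of $M$ eventually violate orthogonality. First, Lemma~\ref{quad44con} gives that $M\ba x$ is $(4,4)$\dash connected with an $N$\dash minor. Since $(M,N)$ is a counterexample to Theorem~\ref{main}, $M\ba x$ is not \ifc, so it admits a \ftv. After relabeling so that one side contains at least two elements of $Q-x$, the ``in particular'' clause of Lemma~\ref{quad44con} forces the other side to be a quad $Y$ of $M\ba x$ with $|Y\cap Q|=1$ and $e\in Y$. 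Writing $Q-x=\{y,y_2,y_3\}$ and $Y=\{e,y,a,b\}$, and using that $M$ is quad\dash free, $Y$ is a $4$\dash circuit of $M$ and $Y\cup x$ is a $5$\dash cocircuit; taking the symmetric difference with $Q\cup e$ then shows that $\{y_2,y_3,a,b\}$ is a $4$\dash cocircuit of $M$.

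Next, Lemma~\ref{quadiso} applied to the quad $Y$ of $M\ba x$ gives $M\ba x\ba a\cong M\ba x\ba e$, so $M\ba a$ has an $N$\dash minor. A second application of Lemma~\ref{quad44con}, with $Y$, $x$, and $a$ playing the roles of $Q$, $e$, and $x$, then delivers that $M\ba a$ is $(4,4)$\dash connected, and (again using that $(M,N)$ is a counterexample) produces a quad $Y^{(a)}$ of $M\ba a$ with $|Y^{(a)}\cap Y|=1$ and $x\in Y^{(a)}$. I then case\dash analyze which element of $Y-a=\{e,y,b\}$ lies in $Y^{(a)}$. The case $e\in Y^{(a)}$ (so $Y^{(a)}=\{x,e,a',b'\}$ for some new $a',b'$) is ruled out by a parity clash: orthogonality of the cocircuit $Y^{(a)}\cup a$ with the circuit $Q$ forces the number of elements of $\{a',b'\}$ lying in $\{y_2,y_3\}$ to be odd, while orthogonality of the circuit $Y^{(a)}$ with the cocircuit $Q\cup e$ forces that same number to be even.

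In each surviving case, the construction is iterated once more: Lemma~\ref{quadiso} applied to $Y^{(a)}$ yields an $N$\dash minor in $M\ba a\ba a'$, Lemma~\ref{quad44con} delivers a third quad $Y^{(a')}$ of $M\ba a'$, and further orthogonality checks of the resulting $5$\dash cocircuit $Y^{(a')}\cup a'$ against the circuits $Q$ and $Y$, together with analogous checks of the $4$\dash circuit $Y^{(a')}$ against the cocircuits $Q\cup e$, $Y\cup x$, and $\{y_2,y_3,a,b\}$, exhaust the remaining configurations and yield a circuit\dash cocircuit pair of $M$ meeting in an odd\dash size set. The main obstacle is the element bookkeeping: at each iteration the newly introduced elements can in principle coincide with previously named ones (for instance with $y_2$ or $y_3$), and one must verify that every such coincidence is either incompatible with the quad structure or is killed by a further orthogonality check. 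As the paragraph preceding the lemma indicates, the necessary bookkeeping is the same as that in the analogous argument of \cite{CMO12}.
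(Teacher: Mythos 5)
Your opening moves are sound and closely parallel the paper's: assuming $N$ is a minor of $M\ba e\ba x$, you invoke Lemma~\ref{quad44con} to get the quad $Y=Q_x$ of $M\ba x$ with $e\in Y$ and $|Y\cap Q|=1$, use Lemma~\ref{quadiso} to transfer the $N$\dash minor to $M\ba a$ for $a\in Y$, and apply Lemma~\ref{quad44con} a second time to produce $Y^{(a)}$; the parity argument ruling out $e\in Y^{(a)}$ checks out. (The paper's sublemma \ref{dualquads} performs the same double application, but always anchored at elements of the original quad $Q$ -- it next deletes $y\in Q\cap Q_x$, then the remaining elements $z,w$ of $Q$ -- whereas you walk outward through the new elements $a,a',\ldots$ of successive quads.)

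The gap is in your final paragraph: the claim that one more iteration plus ``further orthogonality checks'' exhausts the remaining configurations is asserted, not proved, and the checks you list do not close the surviving cases. Concretely, with $Q=\{x,y,y_2,y_3\}$ and $Y=\{e,y,a,b\}$: if $Y^{(a)}\cap Y=\{y\}$, then $Y^{(a)}=\{x,y,a',b'\}$ with $a',b'$ entirely new elements passes every orthogonality test against $Q$, $Y$, $Q\cup e$, $Y\cup x$, and $\{y_2,y_3,a,b\}$; if $Y^{(a)}\cap Y=\{b\}$, then $Y^{(a)}=\{x,b,y_2,b'\}$ with $b'$ new likewise survives. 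So after one more iteration you face a strictly larger collection of partially overlapping quads and no visible termination. This is not a bookkeeping nuisance that the reference to \cite{CMO12} can absorb: the paper's own proof of this lemma hits a configuration that is consistent with \emph{all} orthogonality constraints and can only be eliminated by a global rank count, namely showing that $Q\cup Q_x\cup Q_y$ is a $9$\dash element set that is spanned and cospanned by a $5$\dash element subset, whence $\lambda_M(Q\cup Q_x\cup Q_y)\leq 5+5-9=1$, contradicting $3$\dash connectivity and $|E(M)|\geq 11$. Your proof needs an analogous connectivity or counting argument (or a return to the paper's traversal, which confines the case analysis to the four elements of $Q$ and terminates); as written, the contradiction is never reached.
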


\begin{proof}
Assume that $N$ is a minor of $M\ba e \ba x$ for some element
$x$ of $Q$.
By Lemma~\ref{quadiso}, deleting any
element of $Q$ from $M\ba e$ produces a matroid with an
$N$\dash minor.
Lemma~\ref{quad44con} implies that $M\ba x$ is $(4,4)$\dash connected
and contains a quad, $Q_{x}$, such that $e\in Q_{x}$, and
$|Q\cap Q_{x}|=1$.

\begin{sublemma}
\label{dualquads}
Assume that $x_{1}$ and $x_{2}$ are elements of
$Q$, and that $M\ba x_{1}$ contains a quad, $Q_{1}$,
such that $e\in Q_{1}$, and
$Q\cap Q_{1}=\{x_{2}\}$.
Then $M\ba x_{2}$ contains a quad, $Q_{2}$, such that
$Q\cap Q_{2}=\{x_{1}\}$ and $Q_{1}\cap Q_{2}=\{e\}$.
\end{sublemma}

\begin{subproof}
Since $M\ba e\ba x_{2}$ has an $N$\dash minor,
we can apply Lemma~\ref{quad44con}, and deduce that
$M\ba x_{2}$ contains a quad $Q_{2}$ such that
$e\in Q_{2}$ and $|Q\cap Q_{2}|=1$.
On the other hand, $M\ba x_{1}$ is $(4,4)$\dash connected,
and contains a quad, $Q_{1}$.
Moreover, $M\ba x_{1}\ba x_{2}$ is isomorphic to
$M\ba x_{1}\ba e$, by
Lemma~\ref{quadiso} and the fact that $e$ and
$x_{2}$ are both in $Q_{1}$, so $M\ba x_{1}\ba x_{2}$ has an $N$\dash minor.
Hence we can apply Lemma~\ref{quad44con} again, and
deduce that $M\ba x_{2}$ contains a quad $Q_{2}'$
such that $x_{1}\in Q_{2}'$ and
$|Q_{1}\cap Q_{2}'|=1$.

We will show that $Q_{2}=Q_{2}'$.
Assume this is not the case.
As $Q_{2}$ and $Q_{2}'$ are both quads
of $M\ba x_{2}$, orthogonality demands that
they are disjoint, or they meet in two elements.
In the latter case,
$Q_{2}\symdif Q_{2}'$ is a circuit of $M$,
and
$(Q_{2}\cup x_{2})\symdif (Q_{2}'\cup x_{2})=
Q_{2}\symdif Q_{2}'$
must be a cocircuit of $M$,
so $M$ has a quad.
As this is impossible, we deduce that $Q_{2}$
and $Q_{2}'$ are disjoint.
Therefore $e\notin Q_{2}'$, as $e$ is in $Q_{2}$.
This means that $|Q\cap Q_{2}'|=2$,
as otherwise the circuit $Q_{2}'$ and the
cocircuit $Q\cup e$ meet in $\{x_{1}\}$.
But $Q_{2}'\cup x_{2}$ is a cocircuit, and
$Q$ is a circuit, and they meet in three elements:
$x_{2}$ and the two elements of
$Q\cap Q_{2}'$.
This contradiction shows that $Q_{2}=Q_{2}'$, so
$x_{1}\in Q_{2}$.
Furthermore, $Q\cap Q_{2}=\{x_{1}\}$ and
$Q_{1}\cap Q_{2}=\{e\}$.
\end{subproof}

Now we return to the proof of Lemma~\ref{nodeletequads}.
Let $y$ be the single element in $Q\cap Q_{x}$.
By \ref{dualquads}, we see that $M\ba y$ has a
quad $Q_{y}$ such that
$Q\cap Q_{y}=\{x\}$ and $Q_{x}\cap Q_{y}=\{e\}$.

Let $\{z,w\}=Q-\{x,y\}$.
We can again apply Lemma~\ref{quad44con} and deduce the existence
of $Q_{z}$, a quad of $M\ba z$ that contains $e$ and
a single element of $Q$.
Note that $Q_{z}\ne Q_{y}$, or else we can take the
symmetric difference of $Q_{y}\cup y$ and
$Q_{z}\cup z$ and deduce that $\{y,z\}$ is a series
pair of $M$.
Assume that $y\in Q_{z}$.
As the cocircuit $Q_{z}\cup z$ and the circuit
$Q_{y}$ both contain $e$, and $x$ is not the
single element in $Q\cap Q_{z}$,
it follows that one of the elements in $Q_{y}-\{x,e\}$ is in $Q_{z}$.
Then the cocircuit $Q_{y}\cup y$ and the circuit
$Q_{z}$ meet in three elements: $e$, $y$, and an
element in $Q_{y}-\{x,e\}$.
Therefore the single element in $Q\cap Q_{z}$
is neither $y$ nor $z$, so it is $x$ or $w$.

First we assume that $Q\cap Q_{z}=\{w\}$.
Then \ref{dualquads} implies that $M\ba w$
has a quad $Q_{w}$ such that
$Q\cap Q_{w}=\{z\}$ and $Q_{z}\cap Q_{w}=\{e\}$.
The cocircuit $Q_{w}\cup w$ and the circuit $Q_{x}$
both contain the element $e$.
Moreover, $y\notin Q_{w}$, so 
there is an element $\alpha$ in $(Q_{x}-\{e,y\})\cap Q_{w}$.
Let $\beta$ be the unique element in
$Q_{x}-\{e,y,\alpha\}$.
Similarly, the cocircuit $Q_{w}\cup w$ and the
circuit $Q_{y}$ have $e$ in common, but
$x\notin Q_{w}$, so there is an element
$\gamma$ in $(Q_{y}-\{e,x\})\cap Q_{w}$.
Thus $Q_{w}=\{e,z,\alpha,\gamma\}$.
Consider the set
$X=\{x,y,z,\alpha,\beta\}$.
It spans: $w$ because of the circuit $Q$;
$e$ because of the circuit $Q_{x}$;
$\gamma$ because it spans the circuit
$Q_{w}$; and $Q_{y}$ because it spans
$x$, $e$, and $\gamma$.
This shows that $Q\cup Q_{x}\cup Q_{y}$ is a
$9$\dash element set satisfying
$r(Q\cup Q_{x}\cup Q_{y})\leq 5$.
Moreover, $X$ cospans $e$ because of the cocircuit
$Q_{x}\cup x$.
It cospans $w$ because it cospans $e$, and
$Q\cup e$ is a cocircuit.
Now it cospans: $\gamma$ as $Q_{w}\cup w$ is a cocircuit;
and $Q_{y}$ as $Q_{y}\cup y$ is a cocircuit.
Thus $X$ spans and cospans $Q\cup Q_{x}\cup Q_{y}$, so
\[
\lambda_{M}(Q\cup Q_{x}\cup Q_{y})
\leq 5+5-9=1.
\]
As $M$ is $3$\dash connected, this means that there is at
most $1$ element not in $Q\cup Q_{x}\cup Q_{y}$.
This is a contradiction as $|E(M)|\geq 11$.
Hence we conclude that $Q\cap Q_{z}=\{x\}$.

Now the cocircuit $Q_{z}\cup z$ and the circuit
$Q_{x}$ both contain $e$, so
$|Q_{z}\cap (Q_{x}-\{e,y\})|=1$.
But this means that the circuit
$Q_{z}$ and the cocircuit $Q_{x}\cup x$ have three
elements in common: $e$, $x$, and the element
in $Q_{z}\cap (Q_{x}-\{e,y\})$.
This contradiction completes the proof of
Lemma~\ref{nodeletequads}.
\end{proof}

\begin{lemma}
\label{no4fans}
Let $(M,N)$ be either $(\bar{M},\bar{N})$ or $(\bar{M}^{*},\bar{N}^{*})$.
Assume that the element $e$ is such that $M\ba e$
is $(4,4)$\dash connected with an $N$\dash minor.
Then $M\ba e$ has no $4$\dash fans.
\end{lemma}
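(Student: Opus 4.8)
The plan is to argue by contradiction: suppose $(a,b,c,d)$ is a $4$\dash fan of $M\ba e$, where $\{a,b,c\}$ is a triangle and $\{b,c,d\}$ is a triad. By Lemma~\ref{fanlemma}, the matroid $M/d$ is $(4,4)$\dash connected with an $N$\dash minor, $M\ba e/d$ is $3$\dash connected, and every \ftv\ $(X,Y)$ of $M/d$ with $|X\cap\{a,b,c\}|\geq 2$ has $Y$ a quad of $M/d$ with $Y\cap\{a,b,c,e\}=\{e\}$. Since $M/d$ is not \ifc\ (as $\bar M$ is a counterexample), such a \ftv\ must exist; I would first check that we may choose one with $|X\cap\{a,b,c\}|\ge 2$. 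Since $\{a,b,c\}$ is a triangle of $M/d$ and $M/d$ is $3$\dash connected, $\{a,b,c\}$ is $3$\dash separating and any $3$\dash separation can be adjusted (using that a triangle lies in the closure of any side containing two of its elements) so that $\{a,b,c\}$ lies entirely on the big side; then the small side $Y$ has $|X\cap\{a,b,c\}|=3$ unless $|Y|\le 4$ forces otherwise, and in all cases we arrange $|X\cap\{a,b,c\}|\ge 2$. Thus $M/d$ has a quad $Q$ with $e\in Q$ and $Q\cap\{a,b,c\}=\emptyset$.

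Now I would dualize and play the same cards on the contraction side. The key point is that $M/d$ plays the role of ``$M\ba e$'' in the dual setting: it is $(4,4)$\dash connected with an $N$\dash minor and has a quad $Q\ni e$. Apply the dual of Lemma~\ref{nodeletequads} (with $(M/d, N)$ in place of $(\bar M,\bar N)$, legitimate since $(M/d)$ arises from $M$, and the hypotheses of the dual lemma concern only triangles/triads of $M$ which are controlled by Lemma~\ref{trianglespersist}): it tells us that $N$ is not a minor of $M/d/z$ for any $z\in Q$. In particular $N$ is not a minor of $M/d/e$. But $Q-e$ is a triad of $M/d$, hence a triad of $M/d\ba e' $... more directly: since $e\in Q$ and $Q$ is a cocircuit of $M/d$, the set $Q-e$ is a $3$\dash element cocircuit once we contract nothing but this forces a structural statement about $M$ that contradicts Lemma~\ref{trianglespersist}. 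The cleanest route: $Q\cup$ (nothing) being a cocircuit of $M/d$ containing $e$, together with $e$ being in no triad of $M$ (because $M\ba e$ is $3$\dash connected), shows $Q\cup d$ or some three-element subset is a cocircuit of $M$; tracing orthogonality with the triangle $\{a,b,c\}$ and the cocircuit $\{b,c,d,e\}$ pins down the structure and yields a triangle or triad of $M$ whose deletion or contraction keeps an $N$\dash minor, contradicting Lemma~\ref{trianglespersist}.

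The main obstacle I anticipate is the bookkeeping in the previous paragraph: extracting a usable contradiction from the quad $Q$ of $M/d$. There are really two sub-cases to separate — whether the element $f$ of $Q$ that ``came from'' the triad side interacts with $\{b,c,d,e\}$ via orthogonality, and whether $Q-e$ is already a triad of $M$ or only of $M/d$. In the latter case $(Q-e)\cup d$ is a cocircuit of $M$ meeting the circuit $\{a,b,c\}$ (if it does) or the circuit through $d$ appropriately, and one shows $Q-e$ contains an element of a triangle of $M$; deleting that element keeps an $N$\dash minor by Lemma~\ref{minorsof45fans} applied to a suitable $4$\dash fan or quad, contradicting Lemma~\ref{trianglespersist}. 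In the former case one uses Lemma~\ref{quadiso} on $Q$ inside $M/d$ together with the dual of Lemma~\ref{nodeletequads} directly. I would structure the write-up as: (i) produce the quad $Q$ of $M/d$ with $e\in Q$, $Q\cap\{a,b,c\}=\emptyset$; (ii) observe $M/d\ba e$ has an $N$\dash minor (from \ref{Mdelecond} in Lemma~\ref{fanlemma}); (iii) apply the dual of Lemma~\ref{nodeletequads} to conclude $M/d/z$ has no $N$\dash minor for $z\in Q$; (iv) derive the structural contradiction with Lemma~\ref{trianglespersist} via orthogonality with $\{b,c,d,e\}$ and $\{a,b,c\}$. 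The only genuinely delicate step is (iv), and I expect it to take a short page of careful orthogonality arguments rather than anything conceptually new.
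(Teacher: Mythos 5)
Your setup is right and matches the paper as far as it goes: you correctly obtain, via Lemma~\ref{fanlemma}, a quad $Q$ of $M/d$ with $Q\cap\{a,b,c,e\}=\{e\}$, and your step (iii) is essentially the paper's move (though you should apply Lemma~\ref{nodeletequads} to the pair $(M^{*},N^{*})$ with the element $d$ --- noting $M^{*}\ba d=(M/d)^{*}$ is $(4,4)$\dash connected with an $N^{*}$\dash minor and quad $Q$ --- rather than ``to $(M/d,N)$'', since the lemma is only stated for $(\bar{M},\bar{N})$ and its dual). The conclusion that $M/d/z$ has no $N$\dash minor for $z\in Q$ is correct.

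The genuine gap is your step (iv). No direct contradiction falls out of $Q$ together with orthogonality against $\{a,b,c\}$ and $\{b,c,d,e\}$. Check: $Q$ is a cocircuit of $M$ containing $e$ and disjoint from $\{a,b,c,d\}$; since $M$ has no quads, $Q\cup d$ is a circuit of $M$, and it meets the cocircuit $\{b,c,d,e\}$ in exactly $\{d,e\}$ --- orthogonality is satisfied everywhere. Moreover, combining your step (iii) with Lemma~\ref{quadiso} shows $M/d\ba z$, hence $M\ba z$, has an $N$\dash minor for every $z\in Q-e$, so by Lemma~\ref{trianglespersist} the elements of $Q-e$ lie in \emph{no} triangles or triads of $M$; and $e$ is in none either. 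So no triangle or triad ``materializes'' from $Q$, and the contradiction you hope for in (iv) is not there. What the paper actually does at this point is aim at a different target: it proves that the two-element reduction $M\ba e/d$ is internally $4$\dash connected, contradicting clause (iv) of the counterexample hypothesis (the ``remove at most two elements'' clause, which your endgame never invokes). That argument analyzes an arbitrary $(4,3)$\dash violator $(X,Y)$ of $M\ba e/d$ meeting $Q-e$, shows $Y$ must be a $4$\dash fan $(y_{1},y_{2},y_{3},y_{4})$ with $y_{4}\in Q-e$, uses your step (iii) plus Lemma~\ref{minorsof45fans} to force $M\ba y_{1}$ to have an $N$\dash minor, and then traces circuits and cocircuits to produce a $5$\dash cofan in $M\ba e$, which forces $|E(M\ba e)|\le 9$ against $|E(M)|\ge 11$. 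Without identifying $M\ba e/d$ as the matroid to certify internally $4$\dash connected, your proof cannot be completed along the lines you describe.
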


\begin{proof}
Assume that $(a,b,c,d)$
is a $4$\dash fan of $M\ba e$.
It follows from Lemma~\ref{fanlemma} that $M/d$ is
$(4,4)$\dash connected with an $N$\dash minor.
Since it is not \ifc, it contains a quad $Q$ such that
$Q\cap \{a,b,c,e\}=\{e\}$.
We will show that $M\ba e/d$ is \ifc, and this
will contradict the fact that $M$ and $N$ provide a counterexample
to Theorem~\ref{main}, thereby proving Lemma~\ref{no4fans}.
Note that Lemma~\ref{fanlemma} states that $M\ba e/d$ is
$3$\dash connected with an $N$\dash minor.

\begin{sublemma}
\label{QminusenotinX}
Let $(X',Y')$ be a \ftv\ of $M\ba e/d$.
Then neither $X'$ nor $Y'$ contains $Q-e$.
\end{sublemma}

\begin{subproof}
Assume that $Q-e\subseteq X'$.
As $Q$ is a circuit of $M/d$, this means that
$e\in \cl_{M/d}(X')$.
Thus $(X'\cup e, Y')$ is a \ftv\ of $M/d$.
Lemma~\ref{fanlemma} says that one side of this \ftv\ is a
quad that contains $e$.
But this is impossible as $|X'\cup e|>4$, and $e\notin Y'$.
\end{subproof}

Let $(X,Y)$ be a \ftv\ of $M\ba e /d$, and assume that
$|X\cap (Q-e)|\geq 2$.
By \ref{QminusenotinX} we see that there is a
single element in $Y\cap (Q-e)$.
Let this element be $y$.
Since $Q-e$ is a triad in $M\ba e/d$, it follows that
$y\in \cl_{M\ba e/d}^{*}(X)$.
Therefore $(X\cup y, Y-y)$ is a
$3$\dash separation in $M\ba e/d$, but
\ref{QminusenotinX} implies that it is not a \ftv.
Hence $|Y|=4$.
Orthogonality with the triad $Q-e$ implies that $Y$ is not a
quad of $M\ba e/d$.
Thus $Y=\{y_{1},y_{2},y_{3},y_{4}\}$, where
$(y_{1},y_{2},y_{3},y_{4})$ is a $4$\dash fan in
$M\ba e /d$.
Orthogonality also implies that $y=y_{4}$.

Assume that $M\ba e/d/y$ has an $N$\dash minor.
Then $M^{*}\ba d\ba y$ has an $N^{*}$\dash minor.
As $M^{*}\ba d$ is $(4,4)$\dash connected, and $y$ is in the quad
$Q$ of this matroid, we now have a contradiction to
Lemma~\ref{nodeletequads}.
Therefore $M\ba e/d/y$ has no $N$\dash minor.
Lemma~\ref{minorsof45fans} implies that
$M\ba e/d\ba y_{1}$, and hence $M\ba y_{1}$, has an $N$\dash minor.
From this, we deduce that $\{y_{1},y_{2},y_{3}\}$ is not a
triangle of $M$, so $\{d,y_{1},y_{2},y_{3}\}$ is a circuit.
Since $\{b,c,d,e\}$ is a cocircuit, this implies that
exactly one $b$ or $c$ is in $\{y_{1},y_{2},y_{3}\}$.
Let $\alpha$ be the single element in $\{b,c\}\cap\{y_{1},y_{2},y_{3}\}$.
Then $\alpha\ne y_{1}$, as $M\ba e/d\ba y_{1}$ has an
$N$\dash minor, and $b$ and $c$ are contained in a triangle of $M$.

Both $\{y_{2},y_{3},y\}$ and $\{a,b,c\}$ contain the
element $\alpha$.
As $\{y_{2},y_{3},y\}$ is a triad in $M\ba e/d$, and hence in
$M\ba e$, and $\{a,b,c\}$ is a triangle of $M\ba e$,
it follows that $\{y_{2},y_{3}\}=\{\alpha,a\}$,
since $y \in Q$ and $Q \cap \{a,b,c\} =\emptyset$.
Hence either $(y,a,b,c,d)$ or $(y,a,c,b,d)$ is a $5$\dash cofan of
$M\ba e$, depending on whether $\alpha=b$ or $\alpha=c$.
In either case, from Proposition~\ref{fans3sep},
and the fact that $M\ba e$ is $(4,4)$\dash connected,
we deduce that $|E(M\ba e)|\leq 9$, a contradiction.
Thus $M\ba e/d$ has no \ftv, and is therefore
\ifc.
This contradiction completes the proof of
Lemma~\ref{no4fans}.
\end{proof}

By Lemma~\ref{candeletee}, we know
we can choose $(M,N)$ to be $(\bar{M},\bar{N})$ or
$(\bar{M}^{*},\bar{N}^{*})$ in such a way that
$M\ba e$ is $(4,4)$\dash connected with an $N$\dash minor
for some element $e\in E(M)$.
From Lemma~\ref{no4fans}, we deduce that $M\ba e$
has no $4$\dash fans, and therefore contains at least one
quad.
Moreover, deleting any element from this quad destroys all
$N$\dash minors, by Lemma~\ref{nodeletequads}.
Therefore we next consider contracting an element from a
quad in $M\ba e$.

\begin{lemma}
\label{conquad}
Let $(M,N)$ be either $(\bar{M},\bar{N})$ or $(\bar{M}^{*},\bar{N}^{*})$.
Assume that the element $e$ is such that $M\ba e$
is $(4,4)$\dash connected with an $N$\dash minor, and that
$Q$ is a quad of $M\ba e$.
If $x\in Q$, then $M\ba e/x$ is $3$\dash connected, and
$M/x$ is $(4,4)$\dash connected with an $N$\dash minor.
In particular, if $(X,Y)$ is a \ftv\ of $M/x$ such that
$|X\cap (Q-x)| \geq 2$, then $Y$ is a quad of $M/x$, and
$Y\cap (Q\cup e)=\{e\}$.
\end{lemma}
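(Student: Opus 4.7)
The plan is to mirror the proof of Lemma~\ref{fanlemma}, with the quad $Q$ of $M\ba e$ playing the role of the $4$\dash fan and $x$ playing the role of $d$. Since $M$ is \ifc\ with $|E(M)|\geq 11$, it contains no quad; hence $Q$ is a $4$\dash circuit of $M$ and $Q\cup e$ is a $5$\dash cocircuit of $M$. Moreover, $e$ lies in no triangle or triad of $M$, because any such triangle or triad would belong to every $N$\dash minor by Lemma~\ref{trianglespersist}, contradicting the existence of an $N$\dash minor of $M\ba e$.

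First I would prove that $M\ba e/x$ is $3$\dash connected by an argument analogous to \ref{Mcond3con}. Given a hypothetical $2$\dash separation $(U,V)$ with $|U\cap(Q-x)|\geq 2$, orthogonality of any circuit of $V\cup x$ through $x$ with the cocircuit $Q$ of $M\ba e$ rules out $x\in\cl_{M\ba e}(V)$ when $Q-x\subseteq U$; then $\lambda_{M\ba e}(U)=\lambda_{M\ba e/x}(U)\leq 1$, contradicting the $3$\dash connectivity of $M\ba e$. Otherwise the unique element $y\in V\cap(Q-x)$ lies in $\cl_{M\ba e/x}(U)$ via the triangle $Q-x$, reducing to the previous subcase unless $|V|=2$. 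In that boundary case $\{y,z\}$ is a $2$\dash circuit of $M\ba e/x$, producing a triangle $\{x,y,z\}$ of $M\ba e$; symmetric differencing with $Q$ yields the $5$\dash element $3$\dash separating set $Q\cup z$ of $M\ba e$, contradicting $(4,4)$\dash connectivity of $M\ba e$ together with $|E(M\ba e)|\geq 10$.

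Next I would show that $M\ba e/x$, and hence $M/x$, has an $N$\dash minor. Lemma~\ref{nodeletequads} forbids deletion of any element of $Q$ from $M\ba e$, while $Q\not\subseteq E(N_{0})$ for any $N$\dash minor $N_{0}$ of $M\ba e$, since otherwise $Q$ would be a $4$\dash element $3$\dash separating set of $N_{0}$ with $|E(N_{0})-Q|\geq 4$, contradicting the internal $4$\dash connectivity of $N$. A short argument shows that contracting two elements of $Q$ in $M\ba e$ creates a $2$\dash circuit whose resolution would force deletion of an element of $Q$; hence exactly one element $z\in Q$ is contracted to obtain $N_{0}$. Applying Lemma~\ref{quadiso} to $(M\ba e)^{*}$ (which still has $Q$ as a quad, and in which deletion corresponds to contraction of $M\ba e$) gives $M\ba e/z\cong M\ba e/x$, so $M\ba e/x$ has an $N$\dash minor. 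The dual of \cite[Lemma~2.6]{Oxl81} then upgrades $3$\dash connectivity from $M\ba e/x$ to $M/x$: the only obstruction would be a triangle of $M$ containing $\{e,x\}$, which is excluded since $e$ is in no triangle of $M$.

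The main obstacle is establishing $(4,4)$\dash connectivity of $M/x$ together with the structural conclusion on $Y$, following the pattern of \ref{3sepsMcond} and \ref{Ydisa4fan}. Given a \ftv\ $(X,Y)$ of $M/x$ with $|X\cap(Q-x)|\geq 2$, I absorb the possible lone element of $(Q-x)\cap Y$ into $X$ via the triangle $Q-x$. If $e\in X$, the entire cocircuit $Q\cup e$ of $M$ lies in $X\cup x$, so $x\in\cl_{M}^{*}(X)$, and $(X\cup x,Y)$ is a \ftv\ of $M$, which is impossible. Hence $e\in Y$, and pushing $(X\cup x,Y-e)$ back into $M\ba e$ bounds $|Y|\leq 5$ by $(4,4)$\dash connectivity of $M\ba e$. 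The case $|Y|=5$ is dispatched via Proposition~\ref{zhou2.15} dualised: $Y$ must be a $5$\dash fan of $M/x$ with $e$ at one end (since $e$ lies in no triad of $M$), and an orthogonality argument pairing $Q\cup e$ with the $4$\dash circuit of $M$ whose contraction of $x$ gives the triangle of the fan through $e$ produces a contradiction. Finally, if $|Y|=4$ and $Y=(y_{1},y_{2},y_{3},y_{4})$ is a $4$\dash fan with $e=y_{1}$, the set $\{e,y_{2},y_{3},x\}$ is a $4$\dash circuit of $M$, and a detailed case analysis mimicking \ref{Ydisa4fan} shows that $M\ba e/x$ is itself \ifc\ with an $N$\dash minor, contradicting our standing counterexample hypothesis. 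Therefore $Y$ is a quad of $M/x$ by Proposition~\ref{quad4fan}, and $Y\cap(Q\cup e)=\{e\}$ follows from the bookkeeping.
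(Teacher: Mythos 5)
The first half of your plan is sound and matches the paper: $3$\dash connectivity of $M\ba e/x$ via the $2$\dash separation argument, the $N$\dash minor via Lemma~\ref{nodeletequads} together with the dual of Lemma~\ref{quadiso}, and the upgrade to $M/x$ because $e$ lies in no triangle. The trouble is in the case analysis on $|Y|$. Your disposal of the $5$\dash element case rests on ``an orthogonality argument pairing $Q\cup e$ with the $4$\dash circuit of $M$ whose contraction of $x$ gives the triangle of the fan through $e$.'' That circuit is $\{x,e,y_{2},y_{3}\}$, and (after your absorption step, which forces $Y\cap(Q-x)=\emptyset$) it meets the cocircuit $Q\cup e$ in exactly $\{x,e\}$ --- an even set, so orthogonality gives no contradiction. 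The working argument uses the triangle at the \emph{other} end of the fan: since $\{y_{2},y_{3},y_{4}\}$ is a triad of $M$ and $M$ has no $4$\dash fans, $\{y_{3},y_{4},y_{5}\}$ cannot be a triangle of $M$, so $\{x,y_{3},y_{4},y_{5}\}$ is a circuit of $M$ meeting $Q\cup e$ in the single element $x$. Worse, your absorption step hides rather than resolves the case in which the \emph{original} $Y$ has five elements and meets $Q-x$ in an element $y$: there no orthogonality contradiction is available at all, and the paper instead shows $y$ must be the end $y_{1}$ of the $5$\dash fan, whence Lemma~\ref{minorsof45fans} and Lemma~\ref{trianglespersist} force $M\ba e\ba y$ to have an $N$\dash minor, contradicting Lemma~\ref{nodeletequads}. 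You also need to rule out the possibility that, before absorption, $|Y|=5$ with $Q-x\subseteq X$, where $Y-e$ would be a quad of $M\ba e$ containing a triad. None of these is covered by your sketch.

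Two smaller points. For $|Y|=4$ with $Y$ a $4$\dash fan, you propose an unspecified ``detailed case analysis mimicking \ref{Ydisa4fan}''; this is unnecessary and, as written, is not a proof. A $4$\dash fan of $M/x$ is a $4$\dash fan of $M^{*}\ba x$, which by this point is known to be $(4,4)$\dash connected with an $N^{*}$\dash minor, so Lemma~\ref{no4fans} (applied to the dual pair) kills this case in one line; the same appeal is what handles the subcase $e\in X$, which your absorption treats too casually (after absorbing $y$ into $X$ the residual $Y$ may have only three elements, so $(X\cup x,Y)$ need not be a \ftv\ of $M$). Finally, your claimed bound $|Y|\leq 5$ applies only to the absorbed set; the original $Y$ can a priori have six elements, and that case must be explicitly reduced to the five-element analysis.
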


\begin{proof}
To see that $M/x$ has an $N$\dash minor, we note that $Q$ is not
contained in the ground set of any $N$\dash minor of $M\ba e$.
By Lemma~\ref{nodeletequads}, we cannot delete any element of $Q$
in $M\ba e$ and preserve an $N$\dash minor.
Therefore we must contract an element of $Q$.
By the dual of Lemma~\ref{quadiso}, we can contract any element.
Thus $M\ba e/x$, and hence $M/x$, has an $N$\dash minor.

\begin{sublemma}
\label{Mconx3con}
$M\ba e/x$ and $M/x$ are $3$\dash connected.
\end{sublemma}

\begin{subproof}
Assume that $(U,V)$ is a $2$\dash separation of $M\ba e/x$
such that $|U\cap (Q-x)|\geq 2$.
If $Q-x\subseteq U$, then $(U\cup x,V)$ is a $2$\dash separation
in $M\ba e$, as $Q$ is a cocircuit in this matroid.
Since $M\ba e$ is $3$\dash connected, this is not true, so
$V$ contains a single element, $y$, of $Q-x$.
Then $y\in \cl_{M\ba e/x}(U)$.
However $(U\cup y, V-y)$ is not a $2$\dash separation of
$M\ba e/x$, or else $(U\cup \{x,y\},V-y)$ is a
$2$\dash separation of $M\ba e$.
Thus $V$ is either a $2$\dash circuit or a $2$\dash cocircuit
in $M\ba e/x$.
Orthogonality with $Q-x$ tells us that the latter case is
impossible.
Therefore $x$ is in a triangle in $M\ba e$ that contains
two elements of $Q$.
The union of $Q$ with this triangle is a $5$\dash element
$3$\dash separating set in $M\ba e$,
contradicting the fact that $M\ba e$ is $(4,4)$\dash connected.
Therefore $M\ba e/x$ is $3$\dash connected.
If $M/x$ is not, then $e$ must be in a triangle with $x$ in $M$,
and this is impossible by Lemma~\ref{trianglespersist}.
\end{subproof}

Let $(X,Y)$ be a \ftv\ of $M/x$, and assume that
$|X\cap (Q-x)|\geq 2$.

\begin{sublemma}
\label{einY}
$e\in Y$.
\end{sublemma}

\begin{subproof}
Assume that $e\in X$.
If $Q-x\subseteq X$, then
$(X\cup x,Y)$ is a \ftv\ of $M$, which is impossible.
Therefore $Y\cap (Q-x)$ contains a single element, $y$.
Now $y\in \cl_{M/x}(X)$, so $(X\cup y, Y-y)$ is a
$3$\dash separation in $M/x$.
Therefore $|Y|=4$, or else
$(X\cup\{x,y\},Y-y)$ is a \ftv\ of $M$.
Orthogonality with $Q-x$ shows that $Y$ is not a quad
of $M/x$.
Therefore $Y$ is a $4$\dash element fan.
This means that $M^{*}\ba x$ is $(4,4)$\dash connected with an
$N^{*}$\dash minor and a $4$\dash fan, contradicting
Lemma~\ref{no4fans}.
\end{subproof}

\begin{sublemma}
\label{lambdaYminus}
$\lambda_{M\ba e}(Y-(Q\cup e))\leq 2$.
\end{sublemma}

\begin{subproof}
As $\lambda_{M/x}(Y)=2$, and $Q-x\subseteq \cl_{M/x}(X)$,
it follows that $\lambda_{M/x}(Y-Q)\leq 2$.
Therefore $\lambda_{M\ba e/x}(Y-(Q\cup e))\leq 2$.
Now $x$ is in the coclosure of the complement of
$Y-(Q\cup e)$ in $M\ba e$, as $Q$ is a cocircuit, so
\[
\lambda_{M\ba e}(Y-(Q\cup e))=\lambda_{M\ba e/x}(Y-(Q\cup e))\leq 2,
\]
as desired.
\end{subproof}

\begin{sublemma}
\label{Yatmostsix}
$|Y|\leq 6$.
\end{sublemma}

\begin{subproof}
Since $M\ba e$ is $(4,4)$\dash connected,
\ref{lambdaYminus} implies that
$|Y-(Q\cup e)| \leq 4$.
The result follows.
\end{subproof}

\begin{sublemma}
\label{Ynotsix}
$|Y|\ne 6$.
\end{sublemma}

\begin{subproof}
Assume that $|Y|=6$.
If $Q-x\subseteq X$, then \ref{lambdaYminus} implies that
$M\ba e$ has a $5$\dash element $3$\dash separating set,
which leads to a contradiction.
Therefore $Y\cap (Q-x)$ contains a single element, $y$.
The same argument implies that $e\in Y$.
Since $Q-x$ is a triangle in $M/x$, it follows that
$y\in \cl_{M/x}(X)$, so $(X\cup x,Y-y)$ is a
$3$\dash separation of $M/x$.
Proposition~\ref{zhou2.15} implies that $Y-y$ is a
$5$\dash fan of $M/x$.
Let $(y_{1},\ldots, y_{5})$ be a fan ordering of $Y-y$.
As $e$ is contained in no triads of $M$, we can assume that
$e=y_{1}$.
As $\{y_{2},y_{3},y_{4}\}$ is a triad of $M/x$, and hence
of $M$, it cannot be the case that $\{y_{3},y_{4},y_{5}\}$
is a triangle, or else $M$ has a $4$\dash fan.
Therefore $\{x,y_{3},y_{4},y_{5}\}$ is a circuit of
$M$ that meets the cocircuit $Q\cup e$ in the single
element $x$.
This contradiction completes the proof of \ref{Ynotsix}.
\end{subproof}

\begin{sublemma}
\label{Ynotfive}
$|Y|\ne 5$.
\end{sublemma}

\begin{subproof}
Assume that $|Y|=5$.
First suppose that $Q-x\subseteq X$.
Then $(X\cup x,Y-e)$ is a $3$\dash separation of $M\ba e$,
by \ref{lambdaYminus}.
Thus $Y-e$ is a quad of $M\ba e$, by Proposition~\ref{quad4fan}
and Lemma~\ref{no4fans}.
But Proposition~\ref{zhou2.15} implies that $Y$ is a
$5$\dash fan of $M/x$.
Thus $Y$ contains a triad of $M/x$, and hence of $M$.
This means that $Y-e$ contains a cocircuit of size at
most three in $M\ba e$, contradicting the fact that it
is a quad.
Thus $Y\cap (Q-x)$ contains a single element, $y$.

By again using Proposition~\ref{zhou2.15}, we see that
$Y$ is a $5$\dash fan of $M/x$.
Let $(y_{1},\ldots, y_{5})$ be a fan ordering.
Orthogonality with $Q-x$ means that $y$ is not contained in a
triad of $M/x$ that is contained in $Y$.
Therefore we can assume that $y=y_{1}$.
As $e$ is in no triad of $M$, it follows that
$e\notin \{y_{2},y_{3},y_{4}\}$.
As $M/x\ba e$ is $3$\dash connected, by \ref{Mconx3con},
we deduce that $(y_{1},y_{2},y_{3},y_{4})$
is a $4$\dash fan of $M/x\ba e$.
Lemmas~\ref{minorsof45fans} and~\ref{trianglespersist}
imply that $M/x\ba e\ba y_{1}$, and hence
$M\ba e\ba y_{1}$, has an $N$\dash minor.
As $y_{1}=y$ is contained in the quad $Q$ of $M\ba e$,
this means we have a contradiction to
Lemma~\ref{nodeletequads}.
\end{subproof}

Now we complete the proof of Lemma~\ref{conquad}.
By combining \ref{Yatmostsix}, \ref{Ynotsix}, and
\ref{Ynotfive}, we see that $M/x$ is $(4,4)$\dash connected.
Since it is not \ifc, $Y$ has four elements, and is therefore
a quad of $M/x$, by an application of Lemma~\ref{no4fans}.
We know that $e\in Y$ by \ref{einY}, but orthogonality with
the triangle $Q-x$ implies that $Y$ contains no other element
of $Q\cup e$.
\end{proof}

Finally, we are in a position to prove Theorem~\ref{main}.
By Lemma~\ref{candeletee}, we can assume that $(M,N)$ is
either $(\bar{M},\bar{N})$ or $(\bar{M}^{*},\bar{N}^{*})$,
and $M\ba e$ is $(4,4)$\dash connected with an $N$\dash minor,
for some element $e$.
Lemma~\ref{no4fans} implies that $M\ba e$ has no $4$\dash fans.
As it is not \ifc, it contains a quad $Q$.
Deleting any element of $Q$ destroys all $N$\dash minors,
by Lemma~\ref{nodeletequads}, so $M\ba e/x$ has an $N$\dash minor,
for some element $x \in Q$.
Lemma~\ref{conquad} says that $M\ba e/x$ is $3$\dash connected,
and $M/x$ is $(4,4)$\dash connected.
As $M/x$ is not \ifc, it has a quad, $Q_{x}$, such that
$(Q\cup e)\cap Q_{x}=\{e\}$.
We will show that $M\ba e/x$ is \ifc, and this will provide
a contradiction that completes the proof of
Theorem~\ref{main}.

Assume that $(X,Y)$ is a \ftv\ of $M\ba e /x$,
where $|X\cap (Q_{x}-e)|\geq 2$.
If $Q_{x}-e\subseteq X$, then $(X\cup e,Y)$ is a \ftv\ of $M/ x$,
as $Q_{x}$ is a circuit in $M/x$.
Then Lemma~\ref{conquad} implies that either $X\cup e$
or $Y$ is a quad that contains $e$, an impossibility.
Therefore $Y\cap (Q_{x}-e)$ contains a single element, $y$.
In $M\ba e/x$, the set $Q_{x}-e$ is a triad, so
$y$ is in the coclosure of $X$.
Therefore $(X\cup y, Y-y)$ is a $3$\dash separation.
If it is a \ftv, then $(X\cup\{y,e\},Y-y)$ is 
a \ftv\ of $M/x$, and this leads to the same contradiction as
before.
Therefore $|Y|=4$.
Orthogonality with $Q_{x}-e$ shows that $Y$ is not a quad of
$M\ba e/x$.
Thus we assume that $(y_{1},y_{2},y_{3},y_{4})$ is a $4$\dash fan
and a fan ordering of $Y$ in $M\ba e/x$.
Then $y=y_{4}$, by orthogonality with $Q_{x}-e$ and
$\{y_{1},y_{2},y_{3}\}$.

Since $y$ is in a quad of $M/x$, Lemma~\ref{nodeletequads}
implies that $M/x/y$, and hence $M\ba e/x/y$ has no
$N$\dash minor.
Therefore Lemma~\ref{minorsof45fans} implies that
$M\ba e/x\ba y_{1}$ has an $N$\dash minor.
This shows that $\{y_{1},y_{2},y_{3}\}$ is not a triangle of
$M$, so $\{x,y_{1},y_{2},y_{3}\}$ is a circuit.
As $Q\cup e$ is a cocircuit, there is a single element,
which we call $z$, in $(Q-x)\cap \{y_{1},y_{2},y_{3}\}$.

Note that $\{y_{2},y_{3},y\}$ is not a triad of $M$,
by orthogonality with the circuit $Q_{x}\cup x$.
Therefore $\{y_{2},y_{3},y,e\}$ is a cocircuit.
This means that $z$ is not in $\{y_{2},y_{3}\}$,
for otherwise $\{y_{2},y_{3},y,e\}$ meets the circuit $Q$
in the single element $z$.
Therefore $z=y_{1}$, and $M\ba e/x\ba z$ has an $N$\dash minor.
This means that $M\ba e\ba z$ has an $N$\dash minor, and as
$z$ is in $Q$, we have contradicted Lemma~\ref{nodeletequads}.
Thus Theorem~\ref{main} is now proved.

%\bibliographystyle{MRstyle}
%\bibliography{References}

\end{document}